\newcommand{\R}{{\mathbb R}}
\newcommand{\rn}{{\mathbb{R}^N}}
\numberwithin{equation}{section}
\newtheorem{theorem}{Theorem}[section]
\newtheorem{proposition}[theorem]{Proposition}
\newtheorem{lemma}[theorem]{Lemma}
\newtheorem{definition}[theorem]{Definition}
\newtheorem{remark}[theorem]{Remark}
\theoremstyle{definition}
\newcommand{\brm}{\begin{remark}\rm}
\newcommand{\erm}{\end{remark}}
\newcommand{\brms}{\begin{remark}\rm}
\newcommand{\erms}{\end{remark}}
\newcommand{\bte}{\begin{theorem}}
\newcommand{\ete}{\end{theorem}}
\newcommand{\bpr}{\begin{proposition}}
\newcommand{\epr}{\end{proposition}}
\newcommand{\ble}{\begin{lemma}}
\newcommand{\ele}{\end{lemma}}
\newcommand{\beq}{\begin{equation}}
\newcommand{\eeq}{\end{equation}}
\newcommand{\bdm}{\begin{displaymath}}
\newcommand{\edm}{\end{displaymath}}
\numberwithin{equation}{section}
\newcommand{\bos}{\begin{remark}\rm}
\newcommand{\eos}{\end{remark}}
\newcommand{\ben}{\begin{enumerate}}
\newcommand{\een}{\end{enumerate}}
\newcommand{\e }{\varepsilon }
\newcommand{\be}{\begin{equation}}
\newcommand{\ee}{\end{equation}}
\newcommand{\inn}{\text{  in   }}
\title[Nonlocal singular problems]{A  variational approach to nonlocal singular problems}
\author[A. Canino]{Annamaria Canino$^*$}
\author[L.\ Montoro]{Luigi Montoro$^*$}
\author[B.\ Sciunzi]{Berardino Sciunzi$^*$}
\keywords{Fractional Laplacian, singular problems, variational methods.}
\thanks{\it 2010 Mathematics Subject
 Classification: 	35S05, 35S15, 35R11, 60G22}
\thanks{$^*$Dipartimento di Matematica e Informatica,
Universit\`a della Calabria,
Ponte Pietro Bucci 31B, I-87036 Arcavacata di Rende, Cosenza, Italy. E-mail: {\em canino@mat.unical.it},  {\em montoro@mat.unical.it}, {\em sciunzi@mat.unical.it}}
\thanks{The authors are members of the Gruppo Nazionale per l'Analisi Matematica, la Probabilit\`a e le loro Applicazioni
(GNAMPA) of the Istituto Nazionale di Alta Matematica (INdAM)}
\begin{document}
\begin{abstract}
We provide a suitable  variational approach
for  a class of nonlocal problems  involving the fractional laplacian and  singular nonlinearities for which  the standard techniques fail.
As a corollary we deduce a characterization of the solutions.
\end{abstract}

\maketitle

\medskip

\section{Introduction and results}\label{introdue}

{In recent years, considerable attention has been given to equations involving general integrodifferential
operators, especially, those with the fractional Laplacian operator. This is related 
to the fact that the nonlocal structure has connection with many real world phenomena. Indeed, nonlocal operators naturally appear in elasticity problems \cite{signorini}, thin obstacle problem \cite{Caf79}, phase transition \cite{AB98, CSM05, SV08b}, flames propagation \cite{crs}, crystal dislocation \cite{fdpv, toland}, stratified materials \cite{savin_vald}, quasi-geostrophic flows \cite{caf_vasseur} and others. Since these operators are also related to L\'{e}vy processes and have a lot of applications to mathematical finance, they have been also studied from a probabilistic point of view (see for example \cite{kass3, bogdan_by, kass1, ito, enrico}).}  We refer the readers to, for instance,  \cite{BMS, BjCaffFigalli,  CafFigalli,  CScpam, CS2011,  CS2012, CMoS, guida, DMPS,  quim} where existence of solutions, qualitative properties of solutions and regularity of solutions  are studied for some nonlocal problems.
In this paper we aim to  provide  a variational structure to the following problem
\begin{equation}\tag{$\mathcal P_{\gamma}$}\label{Eq:P}
\left\{
\begin{array}{rcl}
(-\Delta)^s u&=&{\displaystyle \frac{1}{u^\gamma}}+\omega\quad\text{in}\,\, \Omega,\\
u&>&0\,\qquad\quad\,\inn \Omega,\\
u&=&0\,\qquad\quad\,\inn \rn\setminus\Omega,\\
\end{array}
\right.
\end{equation}
where $\omega\in W^{-s,2}(\Omega)$ i.e. the dual space of $W^{s,2}_0(\Omega)$ that we will define below,   $\Omega$ is a bounded smooth domain, $0<s<1$, $N>2s$, $(-\Delta)^s$ is the fractional Laplacian (see Section \ref{sec: 2} for the definition) and the equation is understood as in Definition \ref{def:definsol}.

In spite of the fact that \eqref{Eq:P} is formally the Euler equation of the functional
\[J(u)=\frac{c_{N,s}}{4}\int_{\mathbb{R}^{2N}} \frac{(u(x) - u(y))^2}{|x - y|^{N+2s}}\,dx\,dy+\int_{\Omega}\Phi(u)\, dx-\langle\omega, u \rangle \quad u\in W^{s,2}_0(\Omega),\]
where
\begin{equation}\label{eq:sssstuuuu}
\Phi(s)=
\begin{cases}
{\displaystyle-\int_1^st^{-\gamma}\,dt} & \text{if}\,s\geq0\\
+\infty& \text{if}\,s<0,
\end{cases}
\end{equation}
a standard variational approach is obstructed by the fact that the energy functional might be identically infinity as it is the case when solutions  do not belong to $W^{s,2}_0(\Omega)$.  Even in  the local case it has been shown in \cite{crandall,LM}
that the solution cannot belong to $W^{1,2}_0(\Omega)$ if $\gamma\geq 3$ so that, as remarked, the classical approach cannot be exploited.
However  many results have been obtained in the literature developing alternative techniques. We only mention here the related results in
 \cite{boccardo,C23,CanDeg,CGS,kaw,lair,LM,saccon,stuart}.

The study of nonlocal problems involving singular nonlinearities is quite undertaken in the literature.
Existence and
 uniqueness of the solution to \eqref{Eq:P} where studied in the recent works \cite{bbmp, CMoSS}. 
 Here, to deal with the nonlocal case,  we exploit some ideas  introduced in \cite{CanDeg} facing the difficulties caused by the nonlocal nature of the problem. In all the paper  we shall take into account the fact that the solutions are not in the classical nonlocal Sobolev spaces and the boundary datum has to be understood in a nonstandard way.

Let us now state our main result.
\begin{theorem}\label{main}
Let $\gamma>0$, $\omega\in W^{-s,2}(\Omega)$ and $u\in W^{s,2}_{\operatorname{loc}}(\Omega)$. If $u$ satisfies the problem
\begin{equation}\tag{$\mathcal P_{D}$}\label{eq:amkdigola1}
\begin{cases}
u>0\,\, \text{a.e. in}\,\,\Omega\,\, \text{and} \,\, u^{-\gamma} \in L^1_{\operatorname{loc}}(\Omega),\\
\\
{\displaystyle \iint_{\mathbb R^{2N}}\frac{\big(u(x)-u(y)\big)(\varphi(x)-\varphi(y))}{|x-y|^{N+2s}}\,dx\,dy-\int_{\Omega}u^{-\gamma}\varphi\,dx= \langle \omega,\varphi\rangle}\quad \forall \varphi\in C^{\infty}_{c}(\Omega),\\
\\
u\leq 0 \,\, \text{on}\,\, \partial \Omega,
\end{cases}
\end{equation}
then $u$ is the solution to the problem
\begin{equation}\tag{$\mathcal P_{V}$}\label{eq:amkdigola2}
\begin{cases}
u>0\,\, \text{a.e. in}\,\,\Omega\,\, \text{and} \,\, u^{-\gamma} \in L^1_{\operatorname{loc}}(\Omega),\\
\\
{\displaystyle \iint_{\mathbb R^{2N}}\frac{\big(u(x)-u(y)\big)\big((v(x)-u(x))-(v(y)-u(y))\big)}{|x-y|^{N+2s}}\,dx\,dy-\int_{\Omega}u^{-\gamma}(v-u)\,dx\geq \langle \omega, v-u \rangle }\\
\,\quad\qquad\qquad\qquad\qquad \qquad\qquad\qquad\qquad\forall v\in u+(W^{s,2}_0(\Omega)\cap L^{\infty}_{c}(\Omega))\,\, \text{with}\,\, v\geq 0 \,\, \text{a.e.}\,\, \text{in}\,\, \Omega,\\
\\
u\leq 0 \,\, \text{on}\,\, \partial \Omega.
\end{cases}
\end{equation}
Moreover if $\omega\in W^{-s,2}(\Omega)\cap L^1_{\operatorname{loc}}(\Omega)$, then the problems \eqref{eq:amkdigola1} and \eqref{eq:amkdigola2} are equivalent.
\end{theorem}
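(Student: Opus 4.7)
The theorem comprises two implications: $(\mathcal P_D)\Rightarrow(\mathcal P_V)$, valid unconditionally, and the converse, which requires $\omega\in L^1_{\operatorname{loc}}$. My strategy for the forward direction is to enlarge the class of admissible test functions in $(\mathcal P_D)$ from $C^\infty_c(\Omega)$ to $W^{s,2}_0(\Omega)\cap L^\infty_c(\Omega)$ and then substitute $\varphi:=v-u$; for the converse, the plan is to test $(\mathcal P_V)$ against two natural one-parameter families of non-negative admissible perturbations of $u$ and let the parameter tend to zero. To carry out the extension, pick $\varphi\in W^{s,2}_0(\Omega)\cap L^\infty_c(\Omega)$ with $\operatorname{supp}\varphi\Subset\Omega$ and approximate it (cut-off plus mollification) by $\varphi_n\in C^\infty_c(\Omega)$ with $\varphi_n\to\varphi$ in $W^{s,2}_0$, supports contained in a fixed compact $K\Subset\Omega$, and $\|\varphi_n\|_\infty$ uniformly bounded. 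The Gagliardo bilinear form passes to the limit by Cauchy--Schwarz on the diagonal piece $K\times K$ (using $u\in W^{s,2}(K)$) and by dominated convergence on the complement (where the kernel is bounded and $u\in L^1_{\operatorname{loc}}$ suffices); the singular term $\int u^{-\gamma}\varphi_n$ converges by dominated convergence with dominant $Cu^{-\gamma}\chi_K\in L^1$; the pairing $\langle\omega,\varphi_n\rangle$ converges by $W^{-s,2}$-continuity.

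With the identity in hand for $\varphi\in W^{s,2}_0\cap L^\infty_c$, I plug in $\varphi=v-u$, which belongs to this class by hypothesis on $v$. The only substantive check is integrability of $u^{-\gamma}(v-u)$: decompose $v-u=(v-u)^+-(v-u)^-$, observe $(v-u)^-\leq u$ a.e.\ (since $v\geq 0$), whence $u^{-\gamma}(v-u)^-\leq\min\{u^{1-\gamma},\|v-u\|_\infty u^{-\gamma}\}\in L^1_{\operatorname{loc}}$, while $u^{-\gamma}(v-u)^+\leq\|v-u\|_\infty u^{-\gamma}\in L^1_{\operatorname{loc}}$. The substitution actually yields the \emph{equality} version of the relation in $(\mathcal P_V)$, a fortiori the stated $\geq$; the boundary trace $u\leq 0$ on $\partial\Omega$ is common to both formulations.

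For the converse under $\omega\in L^1_{\operatorname{loc}}$, fix $\varphi\in C^\infty_c(\Omega)$ non-negative and insert into $(\mathcal P_V)$ the admissible perturbations $v^+_\varepsilon:=u+\varepsilon\varphi$ and $v^-_\varepsilon:=(u-\varepsilon\varphi)^+=u-\min(u,\varepsilon\varphi)$. Dividing by $\varepsilon$ and letting $\varepsilon\to 0^+$ will produce two inequalities pointing in opposite directions, whose combination is the desired equality in $(\mathcal P_D)$. The pointwise convergence $\varepsilon^{-1}\min(u,\varepsilon\varphi)\to\varphi$ on the full-measure set $\{u>0\}$, together with the pointwise bound by $\varphi$, drives dominated convergence in the singular term (dominant $u^{-\gamma}\varphi\in L^1$) and, crucially via $\omega\in L^1_{\operatorname{loc}}$, in the $\omega$-pairing (dominant $|\omega|\varphi\in L^1$). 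General $\varphi\in C^\infty_c(\Omega)$ follows from the non-negative case by linearity. The hardest step of the whole proof is the convergence of the nonlocal bilinear form under the truncation $u\mapsto\min(u,\varepsilon\varphi)$: unlike in the local analogue \cite{CanDeg}, where the gradient of the truncation admits a clean pointwise description, the Gagliardo double integral couples all pairs $(x,y)$ simultaneously and requires a careful monotone/dominated argument performed directly on the kernel. It is precisely the mismatch between this nonlocal seminorm and the abstract pairing $\langle\omega,\,\cdot\,\rangle$ that necessitates the stronger assumption $\omega\in L^1_{\operatorname{loc}}$ for the converse.
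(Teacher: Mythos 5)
Your overall strategy coincides with the paper's: the forward implication is obtained by enlarging the test class in \eqref{eq:amkdigola1} to $W^{s,2}_0(\Omega)\cap L^{\infty}_{c}(\Omega)$ by density and substituting $\varphi=v-u$, and the converse is obtained by inserting the perturbations $u+\varepsilon\varphi$ and $(u-\varepsilon\varphi)^{+}$ and letting $\varepsilon\to 0^{+}$. The forward direction as you present it is essentially the paper's argument and is fine.

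The gap is exactly at the step you yourself flag as the hardest and then do not carry out: the passage to the limit in the Gagliardo bilinear form for the truncated perturbation. Writing, as the paper does, $v_t=(u+tv)^{+}$ with $v\le 0$ smooth and $\varphi_t=(v_t-u)/t$, on the set where the truncation is active at \emph{both} points, i.e.\ on $K_{v_t}^{c}\times K_{v_t}^{c}$ with $K_{v_t}=\operatorname{supp}(v_t)$, one has $\varphi_t=-u/t$, hence $\varphi_t(x)-\varphi_t(y)=-(u(x)-u(y))/t$: there is no $t$-uniform integrable dominant, and dominated (or monotone) convergence is simply not available on that region. The paper's resolution is not a convergence argument on the whole kernel but a one-sided sign argument: it decomposes $\mathbb{R}^{2N}$ according to $K_{v_t}$ and $K_{v_t}^{c}$, observes that the contribution of $K_{v_t}^{c}\times K_{v_t}^{c}$ equals $-\tfrac{c_{N,s}}{2t}\iint |u(x)-u(y)|^{2}|x-y|^{-N-2s}\,dx\,dy\le 0$, and that the mixed piece over $(K_{v_t}\times K_{v_t}^{c})\cap\{u(x)<u(y)\}$ (the term $A_3$ in \eqref{eq:splitgentiloni}) is also $\le 0$; these terms are discarded from the upper bound, and dominated convergence is applied only to the remaining pieces $A_1,A_2$, for which explicit $L^{1}$ dominants are exhibited in \eqref{eq:kanottedivascograziano} and \eqref{eq:kanottedivascograzianoluigi}. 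A second, smaller omission: those dominants require $u\in L^{1}(\Omega)$, not merely $u\in W^{s,2}_{\operatorname{loc}}(\Omega)$, and the paper obtains this by first invoking Theorem \ref{thm:equivvariaz} to conclude that a solution of \eqref{eq:amkdigola2} is the minimizer of $J_{\omega}$ and hence lies in $u_0+W^{s,2}_0(\Omega)$. Without the sign observation and without this global integrability of $u$, the ``careful monotone/dominated argument performed directly on the kernel'' you appeal to cannot be completed as stated.
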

Note that $u\in W^{s,2}_{\operatorname{loc}}(\Omega)$ is a solution to \eqref{eq:amkdigola2} if and only if $u$ is the minimum of a suitable functional actually defined in \eqref{eq:ognivoltanottevasco}. Remarkably, this provides a variational characterization of the solutions that is completely new in this setting and that could be exploited to deduce existence and multiplicity results under suitable assumptions.\\

\noindent Furthermore,
as consequence  of  Theorem \ref{main}, we also provide a decomposition of the solution $u$. Namely we deduce that:
\[
u=u_0+w
\]
where, $u_0\in L^{\infty}(\Omega)$ is the unique solution to \eqref{Eq:P} with $\omega \equiv0$ (see Proposition \ref{pro:u_0} below) and
$w\in W^{s,2}_{0}(\Omega)$ is a critical point (in the meaning of \cite{Szul} ) of an associated functional.

\noindent To state such a result let us start considering
$g: \Omega \times \mathbb{R} \rightarrow \mathbb{R}$ satisfying the growth assumption
\begin{equation}\label{eq:hypong}
\begin{cases}
\text{there exists}\,\,  a \in L^{\frac{2N}{N+2s}}(\Omega) \,\, \text{and} \,\, b\in \mathbb{R}\,\, \text{such that}\\
 |g(x,t)| \le a(x) + b\,|t|^{\frac{N+2s}{N-2s}}  \,\, \text{for a.e.}\,\, x \in \Omega \text{ and every } t \in \mathbb{R}.
\end{cases}
\end{equation}
Then let
$g_1(x,t) = g(x, u_0(x) + t)$,
$G_1(x,t)=\int_0^t g_1(x,t)
\, dt$ and   $\Phi : W^{s,2}_0(\Omega)
\,\rightarrow \, \rbrack -\infty, +\infty\lbrack$ the $C^1$ functional defined by
$$\Phi(u) =- \int_{\Omega}G_1(x, u)\,dt. $$
Moreover let $\Psi : W^{s,2}_0(\Omega)
\,\rightarrow \, \rbrack -\infty, +\infty]$ be the convex functional defied by
\[
\Psi(v)=\frac{c_{N,s}}{4}\iint_{\mathbb{R}^{2N}} \frac{|v(x)- v(y)|^2}{|x - y|^{N+2s}}\,dx\,dy
+\int_{\Omega}G_0(x,v)\, dx.
\]
Finally define $F: W^{s,2}_0(\Omega)
\,\rightarrow \, \rbrack -\infty, +\infty]$  by
\begin{equation}\label{eq:miseravallafinemannaia}
F(v)=\Psi(v)+ \Phi(v).
\end{equation}
We have the following
\begin{theorem}\label{eq:thspostato}
Let $\gamma>0$.

The function $u\in W^{s,2}_{\operatorname{loc}}(\Omega)\cap L^{\frac{2N}{N-2s}(\Omega)}$ is a solution to the problem
\begin{equation}\label{eq:amkdigola11estioam}
\begin{cases}
u>0\,\, \text{a.e. in}\,\,\Omega\,\, \text{and} \,\, u^{-\gamma} \in L^1_{\operatorname{loc}}(\Omega),\\
\\
{\displaystyle \iint_{\mathbb R^{2N}}\frac{\big(u(x)-u(y)\big)(\varphi(x)-\varphi(y))}{|x-y|^{N+2s}}\,dx\,dy=\int_{\Omega}u^{-\gamma}\varphi\,dx +\int_{\Omega}g(x,u)\varphi\,dx\quad \forall \varphi\in C^{\infty}_{c}(\Omega)},\\
\\
u\leq 0 \,\, \text{on}\,\, \partial \Omega,
\end{cases}
\end{equation}

\

if and only if

\
\begin{equation}\label{eq:lastappl}
u\in u_0+W^{s,2}_0(\Omega)\,\,\text{and}\,\, w:=u-u_0 \,\, \text{is a critical point of } F.
\end{equation}
\end{theorem}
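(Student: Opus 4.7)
The plan is to apply Theorem \ref{main} in order to reduce \eqref{eq:amkdigola11estioam} to the variational-inequality setting, and then to translate the resulting inequality, via the shift $u = u_0 + w$, into the Szulkin critical-point condition for $F = \Psi + \Phi$ on $W^{s,2}_0(\Omega)$. Under the growth bound \eqref{eq:hypong} combined with $u \in L^{2N/(N-2s)}(\Omega)$, the right-hand side $\omega := g(\cdot, u)$ lies in $L^{2N/(N+2s)}(\Omega) \hookrightarrow W^{-s,2}(\Omega) \cap L^1_{\mathrm{loc}}(\Omega)$, so Theorem \ref{main} identifies \eqref{eq:amkdigola11estioam} with the variational inequality \eqref{eq:amkdigola2} driven by this $\omega$. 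It therefore suffices to show that \eqref{eq:amkdigola2}, read in the variable $w=u-u_0$, coincides with the critical-point condition for $F$.

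\noindent\textbf{Decomposition $u - u_0 \in W^{s,2}_0(\Omega)$.} By Proposition \ref{pro:u_0}, the reference solution $u_0 \in L^\infty(\Omega)\cap W^{s,2}_0(\Omega)$ solves \eqref{Eq:P} with $\omega \equiv 0$. Testing \eqref{eq:amkdigola2} against $v = u_0 + T_k((u-u_0)_+) - T_k((u-u_0)_-)$, where $T_k$ denotes the truncation at level $k$, and exploiting the bilinearity
\[
\iint_{\R^{2N}}\frac{(u(x)-u(y))(\eta(x)-\eta(y))}{|x-y|^{N+2s}}\,dx\,dy - \iint_{\R^{2N}}\frac{(u_0(x)-u_0(y))(\eta(x)-\eta(y))}{|x-y|^{N+2s}}\,dx\,dy = \iint_{\R^{2N}}\frac{(w(x)-w(y))(\eta(x)-\eta(y))}{|x-y|^{N+2s}}\,dx\,dy
\]
together with the monotonicity inequality $(u^{-\gamma}-u_0^{-\gamma})(u-u_0) \le 0$, produces a uniform Gagliardo-seminorm bound on $T_k(w)$ in terms of $\|g(\cdot,u)\|_{L^{2N/(N+2s)}}$. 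Passing $k\to\infty$ and using $u,u_0\ge 0$ with boundary data $\le 0$ yields $w \in W^{s,2}_0(\Omega)$.

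\noindent\textbf{Equivalence with Szulkin's condition.} For $\tilde w \in w + (W^{s,2}_0(\Omega) \cap L^\infty_c(\Omega))$ with $u_0 + \tilde w \ge 0$, set $v := u_0 + \tilde w$; then $v$ is admissible in \eqref{eq:amkdigola2}. Substituting $u=u_0+w$ and $v-u = \tilde w - w$ and expanding the Gagliardo bilinear form into a self-part in $w$ plus a cross-term in $u_0$, the cross-term equals $\int_\Omega u_0^{-\gamma}(\tilde w - w)\,dx$ by the equation for $u_0$ extended from $C^\infty_c(\Omega)$ to $W^{s,2}_0 \cap L^\infty_c$ by density. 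Combining with $-\int_\Omega (u_0+w)^{-\gamma}(\tilde w - w)\,dx$ and recognising $u_0^{-\gamma} - (u_0+w)^{-\gamma} = \partial_v G_0(x, w)$, the variational inequality rewrites as
\[
\iint_{\R^{2N}}\frac{(w(x)-w(y))((\tilde w - w)(x)-(\tilde w - w)(y))}{|x-y|^{N+2s}}\,dx\,dy + \int_\Omega \partial_v G_0(x, w)(\tilde w - w)\,dx \ge \int_\Omega g(x, u_0+w)(\tilde w - w)\,dx.
\]
By the convexity of the Gagliardo seminorm squared and of $G_0(x,\cdot)$ (with the value $+\infty$ whenever $u_0+v < 0$), this inequality is equivalent to
\[
\Psi(\tilde w) - \Psi(w) + \langle \Phi'(w), \tilde w - w\rangle \ge 0,
\]
valid for every $\tilde w \in W^{s,2}_0(\Omega)$ (trivially so when $u_0+\tilde w \not\ge 0$). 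A density step removes the $L^\infty_c$-restriction on $\tilde w - w$ and yields exactly the Szulkin critical-point condition for $F$. The converse direction reverses the same chain: given a critical point $w$ of $F$, one tests with $\tilde w = w + t\eta$ for $\eta \in W^{s,2}_0\cap L^\infty_c$ and $t>0$ small enough to keep $u_0+w+t\eta\ge 0$, recovers \eqref{eq:amkdigola2} for $u := u_0 + w$ with $\omega = g(\cdot,u)$, and concludes \eqref{eq:amkdigola11estioam} via Theorem \ref{main}.

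\noindent\textbf{Main obstacle.} The most delicate technical point is the justification of the cross-term identity
\[
\iint_{\R^{2N}}\frac{(u_0(x)-u_0(y))(\eta(x)-\eta(y))}{|x-y|^{N+2s}}\,dx\,dy = \int_\Omega u_0^{-\gamma}\,\eta\,dx
\]
on the full test class $W^{s,2}_0(\Omega) \cap L^\infty_c(\Omega)$, and, in the final density step of the Equivalence paragraph, even for generic $\eta \in W^{s,2}_0(\Omega)$. Indeed $\int_\Omega u_0^{-\gamma}|\eta|\,dx$ must be finite despite the potential blow-up of $u_0^{-\gamma}$ at $\partial\Omega$. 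This is handled by combining the Hopf-type boundary behavior $u_0(x) \gtrsim d(x,\partial\Omega)^s$ for solutions of the unperturbed singular problem with the fractional Hardy inequality for $W^{s,2}_0(\Omega)$. Once the identity is secured on the right test class, all the algebraic manipulations in the Decomposition and Equivalence steps become rigorous.
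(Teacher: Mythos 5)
Your overall architecture is the right one and is the paper's: set $\omega:=g(\cdot,u)$, use \eqref{eq:hypong} and $u\in L^{2N/(N-2s)}(\Omega)$ to get $\omega\in L^{2N/(N+2s)}(\Omega)\hookrightarrow W^{-s,2}(\Omega)\cap L^1_{\operatorname{loc}}(\Omega)$, pass through Theorem \ref{main} to the variational inequality, and read the result as Szulkin's condition for $F=\Psi+\Phi$. The difference is that the paper then simply invokes Theorem \ref{thm:equivvariaz}: the solution of \eqref{eq:amkdigola2} is the \emph{unique} minimizer of $J_\omega$, which by construction lies in $u_0+W^{s,2}_0(\Omega)$ and satisfies the full minimality inequality \eqref{eq:vascocomequestasera} against every $v\in W^{s,2}_0(\Omega)$; rewriting $-\langle\omega,v-w\rangle=\langle\Phi'(w),v-w\rangle$ gives the Szulkin condition in one line. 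You instead try to re-derive this content directly from the variational inequality, and both substitute arguments have gaps. First, your ``Decomposition'' step tests \eqref{eq:amkdigola2} with $v-u=T_k(w)-w$ (up to signs), which is neither bounded nor compactly supported, hence not an admissible competitor in \eqref{eq:amkdigola2}; the membership $u\in u_0+W^{s,2}_0(\Omega)$ cannot be extracted this way, and in the paper it comes for free from the identification with the minimizer of $J_\omega$ (whose finiteness forces $u-u_0\in W^{s,2}_0(\Omega)$) together with uniqueness via the comparison principle.

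Second, the ``density step'' that removes the $L^\infty_c$ restriction and the resolution you propose for your ``main obstacle'' do not work as stated. You want $\int_\Omega u_0^{-\gamma}|\eta|\,dx<\infty$ for generic $\eta\in W^{s,2}_0(\Omega)$ via $u_0\gtrsim d^s$ and the fractional Hardy inequality; but $u_0^{-\gamma}\lesssim d^{-\gamma s}$ only yields $\int_\Omega d^{2s}u_0^{-2\gamma}\,dx<\infty$ when $\gamma<1+\tfrac{1}{2s}$, so the argument fails for large $\gamma$ — which is exactly the regime where solutions leave the energy space and the whole construction is needed. The paper circumvents this entirely: the singular potential and the linear term $s\,u_0^{-\gamma}$ are packaged into the single nonnegative convex integrand $G_0$ of \eqref{eq:G0}, so $\Psi$ takes values in $[0,+\infty]$, the Szulkin inequality is vacuous whenever $\Psi(\tilde w)=+\infty$, and for competitors with $G_0(x,\tilde w)\in L^1(\Omega)$ the needed integrability of $\bigl(u_0^{-\gamma}-u^{-\gamma}\bigr)(\tilde w-w)$ is obtained from convexity as in \eqref{eq:dsaljdsnkkajhlj}, never by splitting off $\int_\Omega u_0^{-\gamma}\eta\,dx$ on its own. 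Both gaps disappear if you replace your two hand-made steps by a citation of Theorem \ref{thm:equivvariaz}; the converse direction then also reduces to reversing \eqref{eq:vascocomequestasera} and applying Theorems \ref{thm:equivvariaz} and \ref{main}, as in the paper.
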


\

\noindent The paper is organized as follows: in Section \ref{sec: 2} we give some preliminaries related to the functional framework associated to problem \eqref{Eq:P},  we introduce the proper notion of solution that will be used through this work and some preliminary results. Section \ref{sec: main} deals with the proof of the main result of this work.

\section{Notations and Preliminary Results}\label{sec: 2}
\noindent Let us recall that, given  a function $u$ in the Schwartz's class $\mathcal{S}(\mathbb{R}^{N})$ we define for $0<s<1$, the fractional Laplacian as
\begin{equation}\label{fourier1}
\widehat{(-\Delta)^{s}}u(\xi)=|\xi|^{2s}\widehat{u}(\xi),\quad \xi\in\mathbb{R}^{N},
\end{equation}
where $\widehat u\equiv \mathfrak F(u)$ is the Fourier transform of $u$.
It is well known (see for example \cite{Stein, enrico}) that this operator can be also represented, for suitable functions, as a principal value of the form
\begin{equation}\label{org1}
(-\Delta)^s u(x):=c_{N,s}\,{\rm P.V.}\int_{\mathbb{R}^N}\frac{u(x)-u(y)}{|x-y|^{N+2s}}\,dy
\end{equation}
where
\begin{equation}\label{constante}
c_{N,s}:=\left(\int_{\mathbb{R}^{N}}{\frac{1-\cos(\xi_1)}{|\xi|^{N+2s}}\, d\xi}\right)^{-1}=\frac{4^s\Gamma\left(\frac{N}{2}+s\right)}{-\pi^{\frac{N}{2}}\Gamma(-s)}>0,
\end{equation}
is a normalizing constant chosen to guarantee that \eqref{fourier1} is satisfied (see \cite{guida, PhS, enrico}).

The symbol $\|\cdot\|_{L^p(\Omega)}$ stands for the standard norm for the $L^p(\Omega)$ space. For a measurable function $u:\mathbb{R}^N\to\mathbb{R}$, we let
\[
[u]_{D^{s,2}(\mathbb{R}^N)} := \left(\int_{\mathbb{R}^{2N}} \frac{|u(x) - u(y)|^2}{|x - y|^{N+2s}}\, dx dy\right)^{1/2}
\]
be its Gagliardo seminorm. We consider the space
$$
W^{s,2}(\mathbb{R}^N) := \big\{u \in L^{2}(\mathbb{R}^N) : [u]_{D^{s,2}(\mathbb{R}^N)}<\infty\big\},
$$
endowed with norm $\|\,\cdot\,\|_{L^2(\mathbb{R}^N)}+[\,\cdot\,]_{D^{s,2}(\mathbb{R}^N)}$. For $\Omega\subset\mathbb{R}^N$ open and bounded,
we consider
\[
W^{s,2}_0(\Omega) := \big\{u \in W^{s,2}(\mathbb{R}^N): \text{$u=0$ a.e. in $\mathbb{R}^N\setminus\Omega$}\big\},
\]
endowed with norm $[\,\cdot\,]_{D^{s,2}(\mathbb{R}^N)}$.
The imbedding $W^{s,2}_0(\Omega)\hookrightarrow L^r(\Omega)$ is continuous
for $1\le r \le 2_s^\ast$ and compact for $1\le r< 2_s^\ast$, where  $2^*_s:={2N}/{(N-2s)}$ and $N>2s$ (as we are assuming throughout the paper).
The space $W^{s,2}_0(\Omega)$ can be equivalently defined as the completion
of $C^\infty_0(\Omega)$
in the norm $\|\,\cdot\,\|_{L^2(\mathbb{R}^N)}+[\,\cdot\,]_{D^{s,2}(\mathbb{R}^N)}$, provided $\partial\Omega$ is smooth enough. In this context by $C^\infty_0(\Omega)$ we mean the space
\begin{equation}\nonumber
C^\infty_0(\Omega):=\{f:\mathbb{R}^N\rightarrow \mathbb{R}\,:\, f\in C^\infty(\mathbb{R}^N), \, \text{support $f$ is compact  and support $f \subseteq\Omega$} \}.
\end{equation}
We shall denote the localized Gagliardo seminorm by
$$
[u]_{W^{s,2}(\Omega)}:
=\left(\int_{\Omega\times\Omega} \frac{|u(x)-u(y)|^2}{|x-y|^{N+2s}}\,dx\,dy\right)^{1/2}.
$$
Finally define the  space
$$
W^{s,2}_{{\rm loc}}(\Omega) := \big\{u:\Omega\to\R : u \in L^{2}(K),\,\, [u]_{W^{s,2}(K)}<\infty, \,\,\,\text{for all $K\Subset\Omega$}\big\}.
$$

Since the way of understanding the boundary condition is not unambiguous, we  give the following (yet introduced in \cite{CMoSS}):

\begin{definition}\label{rediri}
We say
that $u\leq 0$ on $\partial\Omega$ if, for every $\e>0$, it follows that
\[
(u-\e)^+\in W^{s,2}_0(\Omega)\,.
\]
We will say that $u= 0$ on $\partial\Omega$ if $u$ is non-negative  and $u\leq 0$ on $\partial\Omega$.
\end{definition}

First of all, in order to give a weak formulation to the problem \eqref{Eq:P}, we prove the following
\begin{proposition}\label{pr:welldefined}
Let $u\in W^{s,2}_{\operatorname{loc}}(\Omega)\cap L^{1}(\Omega)$ and $u=0$ for a.e. $x\in \mathbb{R}^N\setminus \Omega$. Then for any $\varphi \in C^\infty_c(\Omega)$,
\[ \frac 12 c_{N,s}\iint_{\mathbb{R}^{2N}}\frac{(u(x)-u(y))(\varphi(x)-\varphi(y))}{|x-y|^{N+2s}}\,dx\,dy <\infty\,.
\]
\end{proposition}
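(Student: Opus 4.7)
The goal is to show that the bilinear form evaluated at $(u,\varphi)$ is absolutely convergent, despite $u$ not belonging globally to $W^{s,2}(\mathbb R^N)$. The idea is a standard decomposition of $\mathbb R^{2N}$ that isolates the singularity of the kernel on the diagonal from the tail behavior at infinity, together with the fact that $\varphi$ has compact support strictly inside $\Omega$.

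Set $K := \operatorname{supp}(\varphi) \Subset \Omega$ and choose an open set $\Omega'$ with $K \Subset \Omega' \Subset \Omega$, so that $d := \operatorname{dist}(K, \mathbb R^N \setminus \Omega') > 0$. I would decompose
\[
\iint_{\mathbb R^{2N}} = \iint_{\Omega' \times \Omega'} + 2\iint_{\Omega' \times (\mathbb R^N \setminus \Omega')} + \iint_{(\mathbb R^N\setminus\Omega') \times (\mathbb R^N \setminus \Omega')},
\]
using symmetry of the integrand in $(x,y)$. The last piece vanishes since $\varphi(x) = \varphi(y) = 0$ there. For the diagonal piece $\Omega' \times \Omega'$, Cauchy–Schwarz gives the bound $[u]_{W^{s,2}(\Omega')} \cdot [\varphi]_{W^{s,2}(\Omega')}$, which is finite because $u \in W^{s,2}_{\operatorname{loc}}(\Omega)$ and $\varphi \in C^\infty_c(\Omega) \subset W^{s,2}(\mathbb R^N)$.

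The main obstacle is the cross term, because there $u$ need not be in $W^{s,2}$ and we cannot apply Cauchy--Schwarz. Here I would use that $\varphi(y) = 0$ for $y \notin \Omega'$ and $\varphi(x) = 0$ for $x \notin K$ to reduce the domain to $K \times (\mathbb R^N \setminus \Omega')$, so that
\[
\iint_{\Omega'\times(\mathbb R^N\setminus \Omega')} \frac{|u(x)-u(y)||\varphi(x)-\varphi(y)|}{|x-y|^{N+2s}}\,dx\,dy \le \|\varphi\|_\infty \iint_{K \times (\mathbb R^N\setminus \Omega')} \frac{|u(x)|+|u(y)|}{|x-y|^{N+2s}}\,dx\,dy.
\]
The $|u(x)|$--piece is controlled by noting that $\int_{\mathbb R^N \setminus \Omega'} |x-y|^{-N-2s}\,dy$ is bounded by a constant $C(d,N,s)$ uniformly for $x \in K$ (since $|x-y|\ge d$), followed by integration against $|u|$ on the bounded set $K$ using $u \in L^1_{\operatorname{loc}}$ (which follows from $u \in L^1(\Omega)$, or even $u \in L^2(K)$). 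The $|u(y)|$--piece is where the hypothesis $u = 0$ on $\mathbb R^N\setminus\Omega$ becomes essential: the integrand restricts to $y \in \Omega \setminus \Omega'$, and again $|x-y| \ge d$ for $x \in K$, so an analogous uniform bound on the inner $x$-integral combined with $u \in L^1(\Omega)$ closes the estimate.

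The key point is really that the compact support of $\varphi$ creates a positive distance $d$ between the two pieces of the decomposition, which tames the kernel; the nonlocal ``boundary'' condition $u=0$ on $\mathbb R^N \setminus \Omega$ together with the global $L^1$ hypothesis then suffices to handle the contribution from $y$ in the complement of $\Omega'$. No $W^{s,2}$ regularity of $u$ near $\partial\Omega$ is needed, which is crucial for the later application of this proposition to singular solutions.
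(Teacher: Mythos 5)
Your proof is correct and follows essentially the same route as the paper: split $\mathbb{R}^{2N}$ using a set slightly larger than $\operatorname{supp}\varphi$, kill the far-field block where $\varphi$ vanishes, handle the local block by Cauchy--Schwarz with $u\in W^{s,2}_{\operatorname{loc}}(\Omega)$, and control the cross terms via the positive distance from $\operatorname{supp}\varphi$ to the complement together with $u\in L^1(\mathbb{R}^N)$ (from $u\in L^1(\Omega)$ and $u=0$ outside $\Omega$). Your write-up of the cross term is in fact a bit more explicit than the paper's, which simply asserts the bound $C(\delta,K,K_\varphi,\|u\|_{L^1(\mathbb{R}^N)},\|\varphi\|_{L^\infty})$.
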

\begin{proof}
Let  $\varphi \in C^\infty_c(\Omega)$ and let us denote $K_{\varphi}= \operatorname{supp}(\varphi) $. Fix now a compact set $ K\subset \Omega$ such that $K_\varphi\subset K$ and use the decomposition
\begin{equation}\nonumber
\begin{split}
\mathbb{R}^N\times \mathbb{R}^N\,=\,\left( K\cup K^c\right)\times \left( K\cup K^c\right),
\end{split}
\end{equation}
where $K^c:= \mathbb R^N \setminus K$. Thus
\begin{eqnarray}\label{eq:split}
&&\frac 12 c_{N,s}\iint_{\mathbb{R}^{2N}}\frac{(u(x)-u(y))(\varphi(x)-\varphi(y))}{|x-y|^{N+2s}}\,dx\,dy\\\nonumber&&= \frac 12 c_{N,s}\iint_{K\times K}\frac{(u(x)-u(y))(\varphi(x)-\varphi(y))}{|x-y|^{N+2s}}\,dx\,dy \\\nonumber
&&+\frac 12 c_{N,s}\iint_{K\times K^c}\frac{(u(x)-u(y))(\varphi(x)-\varphi(y))}{|x-y|^{N+2s}}\,dx\,dy
 \\\nonumber
&&+\frac 12 c_{N,s}\iint_{K^c\times K}\frac{(u(x)-u(y))(\varphi(x)-\varphi(y))}{|x-y|^{N+2s}}\,dx\,dy,
\end{eqnarray}
since
\begin{equation}\nonumber
\frac 12 c_{N,s}\iint_{K^c \times K^c}\frac{(u(x)-u(y))(\varphi(x)-\varphi(y))}{|x-y|^{N+2s}}\,dx\,dy=0.
\end{equation}
We prove that all the three terms on the right-hand side of \eqref{eq:split} are well defined. In fact
\begin{equation}\nonumber
\frac 12 c_{N,s}\iint_{K \times K}\frac{(u(x)-u(y))(\varphi(x)-\varphi(y))}{|x-y|^{N+2s}}\,dx\,dy<C,
\end{equation}
for some positive constant $C$, since by hypothesis $u\in W^{s,2}_{\operatorname{loc}}(\Omega)$.

We can write the second term as
\begin{eqnarray}\label{eq:split2}
&&\frac 12 c_{N,s}\iint_{K\times K^c}\frac{(u(x)-u(y))(\varphi(x)-\varphi(y))}{|x-y|^{N+2s}}\,dx\,dy\\\nonumber
&&=\frac 12 c_{N,s}\iint_{K_{\varphi}\times K^c}\frac{(u(x)-u(y))(\varphi(x)-\varphi(y))}{|x-y|^{N+2s}}\,dx\,dy.
\end{eqnarray}
We observe that,  for all points $(x,y)\in K_\varphi\times K^c$, we have that $|x-y|\geq \delta>0$, for some positive constant $\delta=\delta(K,K_\varphi)$ and therefore
\begin{eqnarray}\label{eq:split22}
&&\frac 12 c_{N,s}\iint_{K_\varphi \times K^c}\frac{(u(x)-u(y))(\varphi(x)-\varphi(y))}{|x-y|^{N+2s}}\,dx\,dy\\\nonumber
&&\leq C,
\end{eqnarray}
with $C=C(\delta,K,K_\varphi, \|u\|_{L^1(\mathbb R^N)}, \|\varphi\|_{L^{\infty}(K_{\varphi})})$ a positive constant. Here  we have used the fact  that  $u\in L^1(\mathbb R^N)$ (since $u\in L^1(\Omega)$ and $u=0$ a.e. in $\mathbb{R}^N\setminus \Omega)$  and $\varphi \in C^\infty(K_{\varphi})$.
>From \eqref{eq:split2} and \eqref{eq:split22} we obtain
\begin{equation}\label{eq:split222}
\frac 12 c_{N,s}\iint_{K \times K^c}\frac{(u(x)-u(y))(\varphi(x)-\varphi(y))}{|x-y|^{N+2s}}\,dx\,dy\leq C.
\end{equation}
For the third term we argue in the same way as  in \eqref{eq:split2}, \eqref{eq:split22} and \eqref{eq:split222}.  Finally, by \eqref{eq:split} we obtain the thesis.
\end{proof}
Having in mind Proposition \ref{pr:welldefined} , the basic definition of solution can be formulated in the following
\begin{definition}\label{def:definsol} A positive function $u \in W^{s,2}_{{\rm loc}}(\Omega)\cap L^{1}(\Omega)$ is a weak solution to problem~\eqref{Eq:P} if ${u^{-\gamma}} \in L^{1}_{{\rm loc}}(\Omega)$,
\[u=0\qquad   \text{for a.e.}\,\, x\in \mathbb{R}^N\setminus \Omega\]
and we have
\begin{equation}\nonumber
		\frac{c_{N,s}}{2}\iint_{\mathbb{R}^{2N}} \frac{(u(x) - u(y))\, (\varphi(x) - \varphi(y))}{|x - y|^{N+2s}}\, dx\, dy
		=\int_{\Omega} \frac{\varphi}{u^\gamma}\, dx+\langle \omega,\varphi\rangle,
	\end{equation}
	for every $\varphi \in C^\infty_c(\Omega)$.
\end{definition}
We state a \emph{weak comparison principle} for sub-super solutions to \eqref{Eq:P}. To do this we first give the following
\begin{definition}\label{eq:subsupersolunonloc}
Given $z\in W^{s,2}_{{\rm loc}}(\Omega)\cap L^{1}(\Omega)$ with   $z\geq 0$, we say that $z$ is a weak supersolution  (respectively  subsolution)  to
\eqref{Eq:P}, if
\begin{eqnarray}\nonumber
&&\iint_{\mathbb{R}^{2N}} \frac{(z(x) - z(y))\, (\varphi(x) - \varphi(y))}{|x - y|^{N+2s}}\, dx\, dy{\geq}\int_{\Omega} \frac{\varphi}{z^\gamma}\, dx+\langle \omega,\varphi \rangle,
\\\nonumber
&&\qquad\qquad\qquad\qquad\qquad\qquad\qquad\qquad\qquad {\forall \varphi\in C^\infty_c(\Omega)\,,\, \varphi\geq 0\,}
\\\nonumber\text{(and respectively)}\\\nonumber
&&
\iint_{\mathbb{R}^{2N}} \frac{(z(x) - z(y))\, (\varphi(x) - \varphi(y))}{|x - y|^{N+2s}}\, dx\, dy{\leq}\int_{\Omega} \frac{\varphi}{z^\gamma}\, dx+\langle \omega,\varphi \rangle,
\\\nonumber
&&\qquad\qquad\qquad\qquad\qquad\qquad\qquad\qquad\qquad
 {\forall \varphi\in C^\infty_c(\Omega)\,,\, 0\leq \varphi\leq z.\,}
\end{eqnarray}
\end{definition}

\begin{theorem}\label{comparison}
Let $\gamma>0$ and  $\omega\in W^{-s,2}(\Omega)$. Let $u$ be a subsolution to  \eqref{Eq:P} such that $u\leq 0$ on $\partial\Omega$ and let $v$ be a supersolution to~\eqref{Eq:P}. Then, $u\leq v$ a.e. in $\Omega$.
\end{theorem}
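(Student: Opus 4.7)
The plan is to subtract the subsolution and supersolution inequalities, tested against a suitable positive-part-type truncation of $u - v$, and then exploit the strict monotonicity of $t \mapsto t^{-\gamma}$ on $(0,\infty)$ together with the coercivity of the fractional bilinear form.

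The principal difficulty is producing a test function admissible in both inequalities: the supersolution inequality (Definition \ref{eq:subsupersolunonloc}) requires $\varphi \in C^\infty_c(\Omega)$ with $0 \leq \varphi \leq v$, while the subsolution inequality only demands $\varphi \geq 0$. For $\eps > 0$ I would take
$$\varphi_\eps := \min\{(u - v - \eps)^+,\, v\},$$
which is nonnegative, pointwise bounded by $v$, and supported in $\{u > v + \eps\} \subset \{u > \eps\}$. Thanks to the boundary condition $u \leq 0$ on $\partial \Omega$ (Definition \ref{rediri}) giving $(u-\eps)^+ \in W^{s,2}_0(\Omega)$, and to $v\geq 0$ on $\R^N \setminus \Omega$, the pointwise bound $\varphi_\eps \leq (u-\eps)^+$ yields $\varphi_\eps \in W^{s,2}_0(\Omega)$. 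A mollification-plus-truncation argument then produces a sequence $\varphi_{\eps,n} \in C^\infty_c(\Omega)$ with $0 \leq \varphi_{\eps,n} \leq v$, converging to $\varphi_\eps$ strongly in $W^{s,2}_0(\Omega)$ and locally in the sense needed for the singular pairings against $u^{-\gamma}$ and $v^{-\gamma}$.

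Using $\varphi_{\eps,n}$ in both inequalities, the $\langle \omega, \cdot \rangle$ contributions cancel, and subtraction combined with the limit $n \to \infty$ gives
$$\iint_{\R^{2N}} \frac{[(u-v)(x) - (u-v)(y)]\,[\varphi_\eps(x) - \varphi_\eps(y)]}{|x-y|^{N+2s}}\,dx\,dy \;\leq\; \frac{2}{c_{N,s}}\int_{\Omega} (u^{-\gamma} - v^{-\gamma})\,\varphi_\eps\,dx.$$
On $\{u > v + \eps\}$, the support of $\varphi_\eps$, one has $0 < v < u$, hence $u^{-\gamma} < v^{-\gamma}$, so the right-hand side is $\leq 0$. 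For the left-hand side, a case-by-case analysis on the three regions $\{u-v \leq \eps\}$, $\{\eps < u-v \leq v+\eps\}$, $\{u-v > v+\eps\}$, combined with the Stampacchia-type algebraic identities for truncations under the fractional bilinear form, bounds it from below by a nonnegative quantity that vanishes only when $\varphi_\eps$ is a.e. constant. Since $\varphi_\eps = 0$ outside $\Omega$, this forces $\varphi_\eps \equiv 0$, i.e., $u \leq v + \eps$ a.e. in $\Omega$. Letting $\eps \to 0$ concludes.

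The main obstacle is the interplay between the admissibility constraint $\varphi \leq v$ in the supersolution inequality and the approximation in $W^{s,2}_0(\Omega)$ by $C^\infty_c(\Omega)$ functions. The truncation by $v$ cannot be dispensed with (for integrability of $v^{-\gamma}\varphi$ when $v$ is small), but it complicates the pointwise algebra that would yield $L(u-v, \varphi_\eps) \geq \tfrac12\,c_{N,s}^{-1}\,[\varphi_\eps]_{D^{s,2}(\R^N)}^2$; controlling the mixed regions where one point is in $\{u > 2v + \eps\}$ and the other is not is the case-heavy step that requires care, and is the reason the proof relies on techniques tailored to the nonlocal singular framework rather than a direct adaptation of the local argument of \cite{CanDeg}.
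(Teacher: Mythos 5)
First, note that the paper does not actually prove this theorem: the entire ``proof'' is a citation of \cite[Theorem 4.2]{CMoSS}, so your sketch cannot be matched against an argument in the text, only against the standard comparison argument it alludes to. Your overall strategy (test both inequalities with a positive-part truncation built from $u-v-\varepsilon$, use the monotonicity of $t\mapsto t^{-\gamma}$ to kill the right-hand side, use the boundary condition $(u-\varepsilon)^+\in W^{s,2}_0(\Omega)$ to place the test function in $W^{s,2}_0(\Omega)$, and let $\varepsilon\to 0$) is indeed the right one. But two things in your write-up are off. First, you have swapped the constraints in Definition \ref{eq:subsupersolunonloc}: it is the \emph{subsolution} inequality that is restricted to $0\le\varphi\le z$ (with $z=u$), while the supersolution inequality holds for all $\varphi\ge 0$. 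Your stated motivation for truncating by $v$ (``integrability of $v^{-\gamma}\varphi$'') therefore addresses a constraint that is not in the definition, and the truncation by $v$ is unnecessary: the natural test function $\varphi_\varepsilon=(u-v-\varepsilon)^+$ already satisfies $0\le\varphi_\varepsilon\le u$ (since $v\ge0$), which is the only pointwise constraint actually required.

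Second, and more seriously, the truncation by $v$ breaks the one step you leave unproved. With $\varphi_\varepsilon=\min\{(u-v-\varepsilon)^+,v\}$ the quantity $\varphi_\varepsilon(x)$ is \emph{not} a nondecreasing function of $w(x):=u(x)-v(x)$ alone (it also depends on $v(x)$), so the pointwise inequality $\bigl(w(x)-w(y)\bigr)\bigl(\varphi_\varepsilon(x)-\varphi_\varepsilon(y)\bigr)\ge 0$ genuinely fails on pairs where one point is capped by $v$ and the other is not; asserting that ``Stampacchia-type identities'' bound the double integral below by a coercive nonnegative quantity is precisely the gap, and I do not see how to close it for this test function. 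With the untruncated choice $(w-\varepsilon)^+=G(w)$, $G$ nondecreasing, one has the elementary pointwise bound $\bigl(w(x)-w(y)\bigr)\bigl(G(w(x))-G(w(y))\bigr)\ge\bigl(G(w(x))-G(w(y))\bigr)^2$, which immediately yields $[(u-v-\varepsilon)^+]_{D^{s,2}(\mathbb{R}^N)}=0$ and hence $(u-v-\varepsilon)^+\equiv 0$; your version would additionally only give $\min\{(u-v-\varepsilon)^+,v\}=0$, from which $u\le v+\varepsilon$ does not follow on $\{v=0\}$ without a separate argument that $v>0$ a.e. Finally, the approximation of $\varphi_\varepsilon$ by $C^\infty_c(\Omega)$ functions in a way that passes to the limit in the singular term $\int_\Omega u^{-\gamma}\varphi$ (where $u^{-\gamma}$ is only $L^1_{\operatorname{loc}}$) is nontrivial and is waved through; the saving grace, which you should make explicit, is that on $\operatorname{supp}\varphi_\varepsilon$ one has $u>\varepsilon$, so $u^{-\gamma}\le\varepsilon^{-\gamma}$ there, provided the approximating sequence is kept supported in $\{u>\varepsilon\}$ (e.g. by the $\min$ with $\varphi_n^+$ device used in the proof of Theorem \ref{thm:equivvariaz}).
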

\begin{proof}
Theorem \ref{comparison} can be proved as \cite[Theorem 4.2]{CMoSS}.\end{proof}
\section{Proof of Theorem \ref{main}}\label{sec: main}
\begin{proposition}\label{pro:u_0}
Let us consider the problem
\begin{equation}\label{eq:u_0}
\left\{
\begin{array}{rcl}
(-\Delta)^s u&=&{\displaystyle \frac{1}{u^\gamma}}\quad\text{in}\,\, \Omega,\\
u&>&0\quad\,\,\inn \Omega,\\
u&=&0\quad\,\,\inn \rn\setminus\Omega.\\
\end{array}
\right.
\end{equation}
Then \eqref{eq:u_0} has a unique solution $u_0\in C^{\infty}(\Omega)$  (in the sense of Definition \ref{def:definsol})  such that
\begin{itemize}
\item [$(i)$] $u_0\in W^{s,2}_0(\Omega)$  if $0<\gamma\leq 1$,   with  ${\rm{ess inf}}_K\, u>0$ for any compact $K\Subset\Omega$;
\item [$(ii)$] $u_0 \in W^{s,2}_{{\rm loc}}(\Omega)\cap L^{1}(\Omega)$
	such that $u_0^{\gamma/2}\in W^{s,2}_0(\Omega)$ if  $\gamma>1$,   with  ${\rm{ess inf}}_K\, u_0>0$ for any compact $K\Subset\Omega$.
\end{itemize}
Moreover
\begin{equation}\label{eq:boundness}
\|u_1\|_{L^{\infty}(\Omega)}^{-\frac{\gamma}{\gamma+1}}u_1\leq u_0\leq ((\gamma+1)u_1)^{\frac{1}{\gamma+1}},
\end{equation}
where $u_1$ is the solution to $(-\Delta)^su=1$ in $\Omega$ and $u=0$ in $\mathbb{R}^N\setminus \Omega$. In particular $u_0\in C(\bar \Omega).$
\end{proposition}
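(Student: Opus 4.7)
The plan is to build $u_0$ as the monotone limit of a regularised, nonsingular sequence, to sandwich that sequence between two explicit barriers constructed from the torsion function $u_1$ (so that \eqref{eq:boundness} comes essentially for free), and then to promote the pointwise limit to the function-space statements $(i)$--$(ii)$ and to $C^\infty(\Omega)$. Uniqueness will come directly from Theorem~\ref{comparison}.

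For each $n\in\mathbb{N}$ consider the nonsingular problem $(-\Delta)^s u_n=(u_n^{+}+1/n)^{-\gamma}$ in $\Omega$, with $u_n=0$ outside $\Omega$. Since the nonlinearity is bounded by $n^\gamma$ and monotone non-increasing in $u_n$, direct minimisation of the associated strictly convex functional on $W^{s,2}_0(\Omega)$ yields a solution $u_n\in W^{s,2}_0(\Omega)$, and the fractional strong maximum principle gives $u_n>0$ in $\Omega$. Theorem~\ref{comparison} applied to $(u_n,u_{n+1})$ gives $u_n\leq u_{n+1}$. Now set
\[
\bar u:=((\gamma+1)u_1)^{1/(\gamma+1)},\qquad\underline u:=\|u_1\|_{L^\infty(\Omega)}^{-\gamma/(\gamma+1)}u_1.
\]
Since $f(t):=((\gamma+1)t)^{1/(\gamma+1)}$ is concave for $\gamma>0$, the pointwise concavity inequality $f(u_1(x))-f(u_1(y))\geq f'(u_1(x))(u_1(x)-u_1(y))$ plugged into the singular integral representation of $(-\Delta)^s$ gives $(-\Delta)^s\bar u\geq f'(u_1)(-\Delta)^s u_1=\bar u^{-\gamma}$, so $\bar u$ is a supersolution. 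A direct computation gives $(-\Delta)^s\underline u=\|u_1\|_{L^\infty}^{-\gamma/(\gamma+1)}\leq\underline u^{-\gamma}$, so $\underline u$ is a subsolution. Theorem~\ref{comparison} then yields $\underline u\leq u_n\leq\bar u$ for every $n$, which in the limit is exactly \eqref{eq:boundness}, forces $\mathrm{ess\,inf}_K u_0>0$ on every $K\Subset\Omega$, and sandwiches $u_0$ between two continuous functions vanishing on $\partial\Omega$, so $u_0\in C(\overline\Omega)$.

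Monotone convergence then gives $u_n\nearrow u_0$ in every $L^p(\Omega)$, and for any $\varphi\in C^\infty_c(\Omega)$ the uniform lower bound on $\mathrm{supp}\,\varphi$ makes $(u_n+1/n)^{-\gamma}\varphi$ uniformly bounded there, so the right-hand side of the weak formulation passes to the limit. For the left-hand side, uniform energy estimates are needed. When $0<\gamma\leq 1$, testing the approximate equation with $u_n\in W^{s,2}_0(\Omega)$ itself gives $[u_n]_{D^{s,2}(\mathbb{R}^N)}^2\leq\int_\Omega u_n^{1-\gamma}\,dx\leq C$, so $u_0\in W^{s,2}_0(\Omega)$, proving $(i)$. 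When $\gamma>1$, one tests instead with a suitable power of $u_n+1/n$ minus a constant (e.g.\ $(u_n+1/n)^\gamma-(1/n)^\gamma$, which lies in $W^{s,2}_0(\Omega)$ since it is a Lipschitz composition of the bounded $u_n$ that vanishes outside $\Omega$), and uses an algebraic inequality of the form $(a-b)(a^\gamma-b^\gamma)\geq c_\gamma(a^{\gamma/2}-b^{\gamma/2})^2$ inside the Gagliardo double integral to obtain a uniform bound on $[u_n^{\gamma/2}]_{D^{s,2}(\mathbb{R}^N)}$, yielding $(ii)$. Once $u_0$ is in hand, $u_0^{-\gamma}$ is smooth on each compact subset of $\Omega$ (by the lower barrier), and standard interior regularity for the fractional Laplacian bootstraps $u_0$ to $C^\infty(\Omega)$. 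Uniqueness is immediate: two solutions $u,v$ are each both sub- and super-solutions, so Theorem~\ref{comparison} applied both ways gives $u=v$.

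The step I expect to be the main obstacle is the energy estimate for $\gamma>1$: the correct nonlocal algebraic ``chain rule'' $(a-b)(a^\gamma-b^\gamma)\geq c_\gamma(a^{\gamma/2}-b^{\gamma/2})^2$ must be isolated, the candidate test function $(u_n+1/n)^\gamma-(1/n)^\gamma$ must be shown to lie in $W^{s,2}_0(\Omega)$ (which requires a composition lemma compatible with the nonlocal norm), and the limit $n\to\infty$ must be taken along a weakly convergent subsequence of $u_n^{\gamma/2}$ to identify the limit with $u_0^{\gamma/2}$. The concavity argument producing $\bar u$ as a supersolution also deserves care, but once $u_1$ is known to be smooth enough it reduces to Jensen's inequality applied to the principal-value kernel, which is classical.
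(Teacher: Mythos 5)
The part of this proposition that the paper actually proves is only the two-sided bound \eqref{eq:boundness} and the resulting continuity up to the boundary: existence, uniqueness and the summability statements $(i)$--$(ii)$ are simply quoted from \cite[Theorems 1.2 and 1.6]{CMoSS}. Your barriers $\bar u=((\gamma+1)u_1)^{1/(\gamma+1)}$ and $\underline u=\|u_1\|_{L^{\infty}(\Omega)}^{-\gamma/(\gamma+1)}u_1$ are exactly the paper's $\hat w$ and $\check w$, and the comparison step is the same; the only difference is that the paper verifies the supersolution property of $\hat w$ directly at the level of the weak formulation (testing the equation for $u_1$ against $\hat w^{-\gamma}\varphi$ and using convexity), whereas you use the pointwise Jensen inequality on the principal-value kernel, an acceptable variant provided you justify the pointwise representation of $(-\Delta)^s f(u_1)$ in the interior and then pass back to the weak form. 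Your reconstruction of the existence theory via the Boccardo--Orsina type approximation is, in substance, the argument of \cite{CMoSS} that the paper chooses not to repeat, so that portion is extra work rather than a divergence from the paper.

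There is, however, a concrete error in the step you yourself single out as the main obstacle. The algebraic inequality $(a-b)(a^{\gamma}-b^{\gamma})\ge c_{\gamma}(a^{\gamma/2}-b^{\gamma/2})^{2}$ is false: with $b=0$ it reads $a^{\gamma+1}\ge c_{\gamma}a^{\gamma}$, which fails for small $a>0$. The correct lemma matching your test function $(u_n+1/n)^{\gamma}-(1/n)^{\gamma}$ is $(a-b)(a^{\gamma}-b^{\gamma})\ge \tfrac{4\gamma}{(\gamma+1)^{2}}\bigl(a^{(\gamma+1)/2}-b^{(\gamma+1)/2}\bigr)^{2}$, and it produces a uniform bound on $\bigl[(u_n+1/n)^{(\gamma+1)/2}\bigr]_{D^{s,2}(\mathbb{R}^N)}$, hence $u_0^{(\gamma+1)/2}\in W^{s,2}_0(\Omega)$ (the classical Lazer--McKenna/Boccardo--Orsina exponent), not the exponent $\gamma/2$ asserted in item $(ii)$. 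Since $u_0$ is bounded, $u_0^{\gamma/2}\in W^{s,2}_0(\Omega)$ is the \emph{stronger} statement ($t\mapsto t^{(\gamma+1)/\gamma}$ is Lipschitz on bounded sets, the inverse map is not), so your argument as written does not deliver $(ii)$. A secondary slip: Theorem \ref{comparison} does not give $\underline u\le u_n$ at finite $n$, because $u_n$ solves the regularised equation and is therefore a subsolution, not a supersolution, of \eqref{eq:u_0}, while $\underline u$ fails to be a subsolution of the regularised problem (at the maximum point of $\underline u$ one has $(-\Delta)^s\underline u=\underline u^{-\gamma}>(\underline u+1/n)^{-\gamma}$). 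The lower barrier should be compared with the limit $u_0$, which is a genuine supersolution; the uniform interior positivity needed to pass to the limit in the nonlinearity then comes from the monotonicity of the sequence together with the interior positivity of its first term.
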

\begin{proof}
The existence,  uniqueness  and summability properties of the solution $u_0$ to \eqref{eq:u_0} follow by \cite[Theorem 1.2, Theorem 1.6]{CMoSS}.
We have to prove \eqref{eq:boundness}.
Let us consider the unique solution $ u_1\in W^{s,2}_0(\Omega)\cap C^{\infty}(\Omega)$ to $(-\Delta)^su=1$ in $\Omega$ and $u=0$ in $\mathbb{R}^N\setminus \Omega$. In particular we have that $u_1>0$ for any compact $K\Subset\Omega$ and by standard regularity results~\cite{quim}, it follows that $u_1\in C^s(\mathbb R^N)$.
Let us define
\begin{equation}\label{eq:hatw}
\hat w=((\gamma +1)u_1)^{\frac{1}{\gamma+1}},\qquad \gamma >0.\end{equation}
We want to show that $\hat w$ is a supersolution to \eqref{eq:u_0}, namely
 \begin{equation}\label{eq:leftwww}
\frac{c_{N,s}}{2}\iint_{\mathbb{R}^{2N}} \frac{(\hat w(x) - \hat w(y))\, (\varphi(x) - \varphi(y))}{|x - y|^{N+2s}}\, dx\, dy
		\geq\int_{\Omega} \frac{\varphi}{\hat w^\gamma}\, dx,
	\end{equation}
	for every $\varphi \in C^\infty_c(\Omega)$ and $\varphi \geq0$.
By \eqref{eq:hatw} it follows that $\hat w\in W_{{\rm loc}}^{s,2}(\Omega)\cap L^1(\Omega)$ and $\hat w=0$ in $\mathbb{R}^N\setminus\Omega$. Therefore by Proposition \ref{pr:welldefined} we have that the l.h.s. of \eqref{eq:leftwww} is well defined.
Hence
\begin{eqnarray}\label{eq:djakkjdankdkhd}
&&\frac{c_{N,s}}{2}\iint_{\mathbb{R}^{2N}} \frac{(\hat w(x) - \hat w(y))\, (\varphi(x) - \varphi(y))}{|x - y|^{N+2s}}\, dx\, dy
 \\\nonumber
&&=\frac{c_{N,s}}{2}\iint_{\mathbb{R}^{2N}\cap\{\varphi(x)\geq\varphi (y)\}} \frac{(\hat w(x) - \hat w(y))\, (\varphi(x) - \varphi(y))}{|x - y|^{N+2s}}\, dx\, dy\\\nonumber
&&+\frac{c_{N,s}}{2}\iint_{\mathbb{R}^{2N}\cap\{\varphi(x)< \varphi (y)\}} \frac{(\hat w(x) - \hat w(y))\, (\varphi(x) - \varphi(y))}{|x - y|^{N+2s}}\, dx\, dy.
\end{eqnarray}
We estimate the first term on the r.h.s of \eqref{eq:djakkjdankdkhd}.  Using a convexity argument,  we deduce
\begin{eqnarray}\label{eq:vas1}
&&\frac{c_{N,s}}{2}\iint_{\mathbb{R}^{2N}\cap\{\varphi(x)\geq\varphi (y)\}}\hat w^{-\gamma}(x) \frac{(u_1(x) - u_1(y))\, (\varphi(x) - \varphi(y))}{|x - y|^{N+2s}}\, dx\, dy\\\nonumber
&&\geq \frac{c_{N,s}}{2}\iint_{\mathbb{R}^{2N}\cap\{\varphi(x)\geq\varphi (y)\}} \frac{(u_1(x) - u_1(y))\, (\hat w^{-\gamma}(x)\varphi(x) - \hat w^{-\gamma}(y)\varphi(y))}{|x - y|^{N+2s}}\, dx\, dy\\\nonumber
&& +\frac{c_{N,s}}{2}\iint_{\mathbb{R}^{2N}\cap\{\varphi(x)\geq\varphi (y)\}} \frac{(u_1(x) - u_1(y))\, (\hat w^{-\gamma}(y) - \hat w^{-\gamma}(x ))\varphi(y)}{|x - y|^{N+2s}}\, dx\, dy\\\nonumber
&&\geq  \frac{c_{N,s}}{2}\iint_{\mathbb{R}^{2N}\cap\{\varphi(x)\geq\varphi (y)\}} \frac{(u_1(x) - u_1(y))\, (\hat w^{-\gamma}(x)\varphi(x) - \hat w^{-\gamma}(y)\varphi(y))}{|x - y|^{N+2s}}\, dx\, dy.
 \end{eqnarray}
Using a similar argument we get
\begin{eqnarray}\label{eq:vas2}
&&\frac{c_{N,s}}{2}\iint_{\mathbb{R}^{2N}\cap\{\varphi(x)<\varphi (y)\}} \frac{(u_1(x) - u_1(y))\, (\hat w^{-\gamma}(x)\varphi(x) - \hat w^{-\gamma}(y)\varphi(y))}{|x - y|^{N+2s}}\, dx\, dy
\\\nonumber
&&\geq  \frac{c_{N,s}}{2}\iint_{\mathbb{R}^{2N}\cap\{\varphi(x)<\varphi (y)\}} \frac{(u_1(x) - u_1(y))\, (\hat w^{-\gamma}(x)\varphi(x) - \hat w^{-\gamma}(y)\varphi(y))}{|x - y|^{N+2s}}\, dx\, dy.
 \end{eqnarray}
>From \eqref{eq:djakkjdankdkhd}, collecting \eqref{eq:vas1} and \eqref{eq:vas2}, we deduce
\begin{eqnarray}\nonumber
&&\frac{c_{N,s}}{2}\iint_{\mathbb{R}^{2N}} \frac{(\hat w(x) - \hat w(y))\, (\varphi(x) - \varphi(y))}{|x - y|^{N+2s}}\, dx\, dy
 \\\nonumber
 && \geq  \frac{c_{N,s}}{2}\iint_{\mathbb{R}^{2N}} \frac{(u_1(x) - u_1(y))\, (\hat w^{-\gamma}(x)\varphi(x) - \hat w^{-\gamma}(y)\varphi(y))}{|x - y|^{N+2s}}\, dx\, dy\\\nonumber
 &&=\int_{\Omega}\frac{\varphi}{\hat w^{\gamma}}\, dx,
 \end{eqnarray}
 that is \eqref{eq:leftwww}. Defining
 \begin{equation}\label{eq:checkw}
\check w=\|u_1\|^{-\frac{\gamma}{\gamma+1}}_{L^{\infty}(\Omega)}u_1,\qquad \gamma >0,\end{equation}
using the weak formulation \eqref{eq:u_0}, we can prove  as well that $\check w$ is a subsolution
to \eqref{eq:u_0}, namely
\begin{equation}\nonumber
\frac{c_{N,s}}{2}\iint_{\mathbb{R}^{2N}} \frac{(\check w(x) - \check w(y))\, (\varphi(x) - \varphi(y))}{|x - y|^{N+2s}}\, dx\, dy
		\leq\int_{\Omega} \frac{\varphi}{\check w^\gamma}\, dx,
	\end{equation}
	for every $\varphi \in C^\infty_c(\Omega)$ and $\varphi \geq0$.
Then using the definitions \eqref{eq:hatw} and \eqref{eq:checkw}, together with Theorem \ref{comparison} we get \eqref{eq:boundness}. Now  it readily follows that $u_0\in C(\bar \Omega).$
\end{proof}
Let $u_0$ as in Proposition  \ref{pro:u_0}. Let $G_0: \Omega \times \mathbb R \rightarrow [0,+\infty]$ be defined by
\begin{equation}\label{eq:G0}
G_0(x,s)=\Phi(u_0(x)+s)-  \Phi(u_0(x))+s u_0(x)^{-\gamma},
\end{equation}
where $\Phi(\cdot)$ is defined in \eqref{eq:sssstuuuu}. Then $G_0(x,0)=0$ and $G_0(x,\cdot)$ is convex and lower semicontinuous for any $x\in \Omega$. Moreover $G_0(x,\cdot)$ is $C^1$ on $]-u_0(x),+\infty[$ with
\begin{equation}\label{eq:derivG}D_s G_0(x,s)=u_0^{-\gamma}(x)-(u_0(x)+s)^{-\gamma}. \end{equation}
Let us define the functional
\begin{eqnarray}\label{eq:ognivoltanottevasco}
&&J_{\omega}(u)=\frac{c_{N,s}}{4}\iint_{\mathbb{R}^{2N}} \frac{\big ((u(x)-u_0(x)) - (u(y)-u_0(y))\big)^2}{|x - y|^{N+2s}}\,dx\,dy\\\nonumber
&&+\int_{\Omega}G_0(x,u-u_0)\, dx- \langle \omega, u-u_0 \rangle\quad \text{if}\,\, u\in u_0+ W^{s,2}_0(\Omega)
\end{eqnarray}
and $J_{\omega}(u)=+\infty$ otherwise. We observe that $J_{\omega}$ is strictly convex, lower semicontinuous and coercive and that $J_{\omega}(u_0)=0.$ We remark that the real domain of the functional $J_{\omega}$ is given by
\[
\big\{u\in u_0+W^{s,2}_0(\Omega)\,:\, G_0(x,u-u_0)\in L^1(\Omega)\big\}.
\]
\begin{theorem}\label{thm:equivvariaz} For every $\omega \in W^{-s,2}(\Omega)$ and $u\in W^{s,2}_{\operatorname{loc}}(\Omega)$, it follows that $u$ is the minimum of $J_{\omega}$ if and only if $u$ verifies
\begin{equation}\label{eq:disvar}
\begin{cases}
u>0\,\, \text{a.e. in}\,\,\Omega\,\, \text{and} \,\, u^{-\gamma} \in L^1_{\operatorname{loc}}(\Omega),\\
\\
{\displaystyle \iint_{\mathbb R^{2N}}\frac{\big(u(x)-u(y)\big)\big((v(x)-u(x))-(v(y)-u(y))\big)}{|x-y|^{N+2s}}\,dx\,dy-\int_{\Omega}u^{-\gamma}(v-u)\,dx\geq \langle \omega, v-u \rangle }\\
\,\quad\qquad\qquad\qquad\qquad \qquad\qquad\qquad\qquad\forall v\in u+(W^{s,2}_0(\Omega)\cap L^{\infty}_{c}(\Omega))\,\, \text{with}\,\, v\geq 0 \,\, \text{a.e.}\,\, \text{in}\,\, \Omega,\\
\\
u\leq 0 \,\, \text{on}\,\, \partial \Omega.
\end{cases}
\end{equation}
In particular  for every $\omega \in W^{-s,2}(\Omega)$,  problem \eqref{eq:disvar} has one and only one solution $u\in W^{s,2}_{\operatorname{loc}}(\Omega)$.
\end{theorem}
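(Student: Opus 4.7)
The strategy is to view $J_\omega$ as a strictly convex variational problem on the affine Hilbert space $u_0+W^{s,2}_0(\Omega)$ and to recognise \eqref{eq:disvar} as its Euler inequality, after using the equation for $u_0$ from Proposition \ref{pro:u_0} to translate from the natural variable $u-u_0$ back to $u$. First, $J_\omega$ admits a unique minimiser in $u_0+W^{s,2}_0(\Omega)$: it is coercive (the Gagliardo seminorm is an equivalent norm on $W^{s,2}_0(\Omega)$, $G_0(x,\cdot)\geq 0$, and $\omega\in W^{-s,2}(\Omega)$), strictly convex (quadratic Gagliardo form plus the convex $G_0$), and lower semicontinuous (Fatou on the quadratic part, convexity on the $G_0$ part, continuity of the linear piece). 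Moreover the minimiser $u$ satisfies $u\geq 0$ a.e., for otherwise $G_0(x,u-u_0)=+\infty$ on a set of positive measure while $J_\omega(u_0)=0$; the strict positivity $u>0$ a.e.\ needed to handle $u^{-\gamma}$ can be obtained by comparison of $u$ with a small multiple of $u_0$ via Theorem \ref{comparison}.

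For the implication ``minimiser implies \eqref{eq:disvar}'', I would fix $v\in u+(W^{s,2}_0(\Omega)\cap L^\infty_c(\Omega))$ with $v\geq 0$ a.e., set $u_t:=(1-t)u+tv\in u_0+W^{s,2}_0(\Omega)$ for $t\in (0,1]$, and exploit $J_\omega(u_t)\geq J_\omega(u)$. Dividing by $t$ and letting $t\downarrow 0^+$, the quadratic Gagliardo term and the linear term $\langle\omega,\cdot\rangle$ are differentiable at $t=0^+$ by direct expansion. For the $G_0$ piece, convexity of $G_0(x,\cdot)$ on $(-u_0(x),+\infty)$ makes the difference quotient
\[
\frac{G_0(x,u-u_0+t(v-u))-G_0(x,u-u_0)}{t}
\]
monotonically non-increasing as $t\downarrow 0^+$; since $v-u\in L^\infty_c(\Omega)$ and $u^{-\gamma},u_0^{-\gamma}\in L^1_{\operatorname{loc}}(\Omega)$, monotone convergence pushes the limit inside the integral to produce $\int_\Omega(u_0^{-\gamma}-u^{-\gamma})(v-u)\,dx$ in view of \eqref{eq:derivG}. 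Collecting these limits gives a ``shifted'' variational inequality in the variable $u-u_0$. Testing \eqref{eq:u_0} against $v-u\in W^{s,2}_0(\Omega)\cap L^\infty_c(\Omega)$ (admissible by density from $C^\infty_c(\Omega)$ thanks to Proposition \ref{pr:welldefined}) and subtracting the resulting identity cancels the $u_0^{-\gamma}$ contributions and the $u_0$ bilinear term, leaving precisely \eqref{eq:disvar}.

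For the converse, let $u\in W^{s,2}_{\operatorname{loc}}(\Omega)$ satisfy \eqref{eq:disvar}. I would first establish $u-u_0\in W^{s,2}_0(\Omega)$, so that $J_\omega(u)$ is finite; this delicate regularity step is obtained by testing the variational inequality with suitable competitors built from $u_0$ and exploiting the boundary condition $u\leq 0$ on $\partial\Omega$ in the sense of Definition \ref{rediri}. Once this is in place, the directional derivative computed in the forward direction, read backwards together with the convexity inequality $J_\omega(v)-J_\omega(u)\geq$ (directional derivative at $u$ in direction $v-u$), yields $J_\omega(v)\geq J_\omega(u)$ for every $v\in u+(W^{s,2}_0(\Omega)\cap L^\infty_c(\Omega))$ with $v\geq 0$. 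To extend this to an arbitrary admissible $v\in u_0+W^{s,2}_0(\Omega)$ with $v\geq 0$ a.e.\ and $J_\omega(v)<+\infty$, I would approximate $v-u$ by $\chi_n T_n(v-u)$, where $T_n$ truncates at level $n$ and $\chi_n$ are cutoffs exhausting $\Omega$, and pass to the limit using dominated/monotone convergence for the quadratic, linear and $G_0$ contributions. Uniqueness of the solution to \eqref{eq:disvar} then follows from the strict convexity of $J_\omega$.

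The hardest step is the joint management of the singular nonlinearity and the nonlocal bilinear form: the singular integral $\int_\Omega u^{-\gamma}(v-u)\,dx$ is only meaningful when $v-u$ has compact support in $\Omega$, which explains the restricted test class in \eqref{eq:disvar}, forces the cutoff/truncation approximation in the converse direction, and obliges us to secure the strict positivity $u>0$ a.e.\ together with the integrability of $u_0^{-\gamma}(v-u)$ \emph{before} we may legitimately use the equation for $u_0$ to cancel the shifted singular terms. The additional subtlety that $u-u_0\in W^{s,2}_0(\Omega)$ is not assumed but must be deduced from \eqref{eq:disvar} is the other non-trivial point of the proof.
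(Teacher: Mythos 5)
Your forward direction follows the paper's strategy (first variation of the strictly convex functional $J_\omega$, then use of the equation for $u_0$ to cancel the shifted singular term), but it has two genuine gaps. First, you never verify the third condition in \eqref{eq:disvar}, namely $u\le 0$ on $\partial\Omega$ in the sense of Definition \ref{rediri}, i.e.\ $(u-\varepsilon)^+\in W^{s,2}_0(\Omega)$ for every $\varepsilon>0$. This is \emph{not} automatic from $u\in u_0+W^{s,2}_0(\Omega)$, because for $\gamma>1$ the function $u_0$ itself need not belong to $W^{s,2}_0(\Omega)$ (Proposition \ref{pro:u_0} only gives $u_0^{\gamma/2}\in W^{s,2}_0(\Omega)$). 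The paper spends the bulk of the proof on exactly this point: it tests the variational inequality with $v=\min\{u-u_0,\varepsilon-(u_0-\sigma)^+\}$, invokes the nonlocal Kato inequality for $u_0$, obtains a bound on $((u_0-\sigma)^++u-u_0-\varepsilon)^+$ in $W^{s,2}_0(\Omega)$ uniform in $\sigma$, and lets $\sigma\to 0^+$. Second, your route to $u>0$ a.e.\ via Theorem \ref{comparison} is circular at that stage: the comparison principle is for sub/supersolutions of \eqref{Eq:P}, and $u$ is not yet known to be one. The paper instead extracts $u>0$ a.e.\ and $u^{-\gamma}\in L^1_{\operatorname{loc}}(\Omega)$ from the integrability of $(u_0^{-\gamma}-(u_0+\xi)^{-\gamma})(v-u)$, which follows from the finiteness of the $G_0$-terms for admissible competitors.

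Your converse is a genuinely different route from the paper's, and its pivotal step is missing. You propose to show directly that a solution of \eqref{eq:disvar} satisfies $u-u_0\in W^{s,2}_0(\Omega)$ by ``testing with suitable competitors built from $u_0$''; this is precisely the hard global regularity statement, and you give no argument for it (nor is it clear how the restricted test class $v\in u+(W^{s,2}_0(\Omega)\cap L^\infty_c(\Omega))$ would yield control of $u-u_0$ up to the boundary). The paper avoids the issue entirely: it lets $\hat u$ be the minimizer of $J_\omega$ (which, by the forward direction, satisfies \eqref{eq:disvar}), observes that both $u$ and $\hat u$ are sub- and supersolutions of \eqref{Eq:P} with $u\le 0$, $\hat u\le 0$ on $\partial\Omega$, and concludes $u\equiv\hat u$ from the weak comparison principle, Theorem \ref{comparison}. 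This uniqueness argument is what makes the boundary condition in \eqref{eq:disvar} indispensable and is the idea your proposal lacks; without it (or a complete proof of your regularity claim plus the truncation/cutoff extension you sketch), the converse implication and the ``one and only one solution'' assertion are not established.
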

\begin{proof}
We start proving \eqref{eq:disvar}. Given $\omega \in W^{-s,2}(\Omega)$, using  standard minimization techniques, there exits only one minimum $u\in u_0+  W^{s,2}_0(\Omega)$ of $J_\omega$. Therefore $G_0(x,u-u_0)\in L^1(\Omega)$ and  (see \eqref{eq:sssstuuuu} and \eqref{eq:G0})
\begin{equation}\label{eq:djskhjkwhsjksoyiyouivb}
u\geq0\quad \text{a.e. in} \quad \Omega.
\end{equation}
Let $v\in u_0+  W^{s,2}_0(\Omega)$ be such that
\[G_0(x,v-u_0)\in L^1(\Omega).\]
Then $v\geq 0$ a.e. in $\Omega$ and moreover  $v-u\in W^{s,2}_0(\Omega)$.  Then since $D_sG_0(\cdot, s)$ is non decreasing (see
\eqref{eq:derivG}), for $\xi \in ((v-u_0) \land (u-u_0),  (v-u_0) \lor (u-u_0))$, we deduce
\begin{eqnarray}\nonumber
&&L^1(\Omega)\ni G_0(x,v-u_0) -G_0(x,u-u_0)= (u_0^{-\gamma}-(u_0+\xi)^{-\gamma})(v-u)\\\nonumber
&&\geq (u_0^{-\gamma}-u^{-\gamma})(v-u),
\end{eqnarray}
namely
\begin{eqnarray}\label{eq:dsaljdsnkkajhlj}
(\frac{1}{u_0^{\gamma}}-\frac{1}{u^{\gamma}})(v-u)\in L^1(\Omega).
\end{eqnarray}
Since $G_0(x,\cdot)$ is convex (see \eqref{eq:G0}) we deduce also that, for $t\in [0,1]$,
\[G_0(x,t(v-u_0)+(1-t)(u-u_0))=G_0(x,u-u_0+t(v-u))  \in L^1(\Omega).\]  Since $u$ is the minimum point, for $t\in (0,1]$ we get
\begin{eqnarray}\label{eq:nortonnnn}\\\nonumber
&&0\leq \frac{J_{\omega}(u+t(v-u))-J_{\omega}(u)}{t}\\\nonumber
&&=\frac{c_{N,s}}{2}\iint_{\mathbb{R}^{2N}} \frac{\big((u(x)-u_0(x)) - (u(y)-u_0(y)\big )\, \big((v(x)-u(x)) - (v(y)-u(y))\big)}{|x - y|^{N+2s}}\, dx\, dy\\\nonumber
&&+ t\frac{c_{N,s}}{4}\iint_{\mathbb{R}^{2N}} \frac{\big((v(x)-u(x)) - (v(y)-u(y))\big)^2}{|x - y|^{N+2s}}\, dx\, dy\\\nonumber&&+\frac{1}{t}\Bigg(\int_{\Omega}G_0(x,u-u_0+t(v-u))\, dx
-\int_{\Omega}G_0(x,u-u_0)\, dx\Bigg)-\langle \omega,(v-u)\rangle\\\nonumber
&&=\frac{c_{N,s}}{2}\iint_{\mathbb{R}^{2N}} \frac{\big((u(x)-u_0(x)) - (u(y)-u_0(y)\big )\, \big((v(x)-u(x)) - (v(y)-u(y))\big)}{|x - y|^{N+2s}}\, dx\, dy\\\nonumber
&&+ t\frac{c_{N,s}}{4}\iint_{\mathbb{R}^{2N}} \frac{\big((v(x)-u(x)) - (v(y)-u(y))\big)^2}{|x - y|^{N+2s}}\, dx\, dy
\\\nonumber
&&+\int_{\Omega}\left(\frac{1}{u_0^{\gamma}}-\frac{1}{(u_0+\xi_t)^{\gamma}}\right)(v-u)\,dx
-\langle \omega,(v-u)\rangle,
\end{eqnarray}
with  $\xi_t \in ((u-u_0+t(v-u)) \land (u-u_0),  (u-u_0+t(v-u)) \lor (u-u_0))$.
Recalling \eqref{eq:dsaljdsnkkajhlj} and that $v-u\in W^{s,2}_0(\Omega)$, passing to the limit for $t\rightarrow 0^+$ in \eqref{eq:nortonnnn} we obtain
\begin{eqnarray}\label{eq:nortonnnn1}\\\nonumber
&&\frac{c_{N,s}}{2}\iint_{\mathbb{R}^{2N}} \frac{\big((u(x)-u_0(x)) - (u(y)-u_0(y)\big )\, \big((v(x)-u(x)) - (v(y)-u(y))\big)}{|x - y|^{N+2s}}\, dx\, dy\\\nonumber
&&\geq \int_{\Omega}\left(\frac{1}{u^{\gamma}}-\frac{1}{u_0^{\gamma}}\right)(v-u)\,dx
+\langle \omega,(v-u)\rangle,
\end{eqnarray}
for every $v\in u_0+  W^{s,2}_0(\Omega)$ such that
$G_0(x,v-u_0)\in L^1(\Omega).$ In particular  \eqref{eq:dsaljdsnkkajhlj} holds for all $v\in C^{\infty}_c(\Omega)$ with  $v\geq 0$. Therefore (since $v$ is arbitrary) we obtain that
\[
(\frac{1}{u_0^{\gamma}}-\frac{1}{u^{\gamma}})v\in L^1(\Omega)\qquad\forall  v\in C^{\infty}_c(\Omega)\,\, \text{with}\,\, v\geq0,
\]
whence  $u^{-\gamma}\in L^1_{\operatorname{loc}}(\Omega)$ and (see also \eqref{eq:djskhjkwhsjksoyiyouivb}) $u>0$ a.e. in $\Omega$.
For $\varepsilon, \sigma>0$ let us define
\begin{equation}\label{eq:defpiaga}
v=\min\{u-u_0,\varepsilon-(u_0-\sigma)^+\}.\end{equation}
Since $t\rightarrow t^+$, $t\in \mathbb R$ is a Lipschitz function, we remark that
\begin{equation}\label{eq:assolo}
w(x):=\varepsilon-(u_0-\sigma)^+\in W^{s,2}_{\operatorname{loc}}(\Omega).
\end{equation}
Moreover, by Proposition \ref{pro:u_0}, we know that $u_0\in C(\overline\Omega)$. Therefore there exists a compact set $K\Subset\Omega$ such that $u< \sigma $ in $K^c=\mathbb R^N\setminus K$. We want to show
\begin{equation}\label{eq:kjsakjsmmmhued}\iint_{\mathbb{R}^{2N}} \frac{|w(x) -  w(y)|^2}{|x - y|^{N+2s}}\, dx\, dy< +\infty.\end{equation}
We have
\begin{equation}\label{eq:stb1}
\iint_{\mathbb{R}^{2N}} \frac{|w(x) -  w(y)|^2}{|x - y|^{N+2s}}\, dx\, dy =\iint_{\mathbb{R}^{2N}\setminus(K^c \times K^c)}\frac{|w(x) -  w(y)|^2}{|x-y|^{N+2s}}\,dx\,dy \\\end{equation}
since
\begin{equation}\nonumber
\int_{K^c\times K^c}\frac{|w(x) -  w(y)|^2}{|x-y|^{N+2s}}\,dx\,dy=0.
\end{equation}
By a symmetry argument
\begin{eqnarray}\label{eq:jsakjdkakdasvampun}
&&\iint_{{\mathbb{R}^{2N}\setminus(K^c \times K^c)}} \frac{|w(x) -  w(y)|^2}{|x - y|^{N+2s}}\, dx\, dy= \iint_{K\times K} \frac{|w(x) -  w(y)|^2}{|x - y|^{N+2s}}\, dx\, dy
\\\nonumber
&&+ 2\iint_{K\times K^c} \frac{|w(x) -  w(y)|^2}{|x - y|^{N+2s}}\, dx\, dy
\end{eqnarray}
and readily by \eqref{eq:assolo}
\[
\iint_{K\times K} \frac{|w(x) -  w(y)|^2}{|x - y|^{N+2s}}\, dx\, dy< +\infty.
\]
Let $\delta =\text{dist}(K,\partial \Omega)/2$.  We have
\begin{eqnarray}\label{eq:pentatonicscale}
&&\iint_{K \times K^c}\frac{|w(x) -  w(y)|^2}{|x-y|^{N+2s}}\,dx\,dy\\\nonumber &&=\int_{K}\,dx\int_{K^c\cap\{|y-x|\leq \delta \} }\frac{|w(x) -  w(y)|^2}{|x-y|^{N+2s}}\,dy+\int_{K}\,dx\int_{K^c\cap\{|y-x|\geq\delta\} }\frac{|w(x) -  w(y)|^2}{|x-y|^{N+2s}}\,dy\\\nonumber
&&=I_1+I_2.
\end{eqnarray}
In particular let us consider a compact $\hat K \Subset\Omega$ such that
\[K\subset \hat K \qquad \text{and}\qquad \text{for}\,\, x\in K\,\, \text{fixed}\quad  K^c\cap\{|y-x|< \delta\} \subset  \hat K.\]
Using \eqref{eq:assolo} we deduce that
\begin{eqnarray}\label{eq:eqgian1}
I_1 &\leq&\int_{K}\, dx\int_{K^c\cap\{|y-x|< \delta\} }\frac{|w(x)-w(y)|^2}{|x-y|^{N+2s}}\,dy\\\nonumber
&\leq&  \int_{\hat K}\, dx\int_{\hat K}\frac{|w(x)-w(y)|^2}{|x-y|^{N+2s-2}}\,dy<+\infty.
\end{eqnarray}
On the other hand
\begin{eqnarray}\label{eq:eqgian2}
I_2&\leq&C(\|u_0\|_{L^{\infty}(\Omega)}) \int_{K}\,dx\int_{\mathbb{R}^N\cap\{|y-x|\geq \delta\} }\frac{1}{|x-y|^{N+2s}}\,dy\\\nonumber
&\leq&\int_{K}dx\int_{\mathbb{R}^N\setminus B_{\delta}(0)}\frac{1}{|y|^{N+2s}}\,dy< +\infty.
\end{eqnarray}
Therefore, recalling \eqref{eq:stb1}, \eqref{eq:jsakjdkakdasvampun},  \eqref{eq:eqgian1} and \eqref{eq:eqgian2}, we obtain \eqref{eq:kjsakjsmmmhued}.
By the definition \eqref{eq:defpiaga}, we deduce  that $v=0$ a.e. in $\mathbb R^N\setminus \Omega$ and $v\in L^2(\Omega)$.
Finally by \eqref{eq:kjsakjsmmmhued}  and recalling also  that $u-u_0\in W^{s,2}_0(\Omega)$, we get that $v\in W^{s,2}_0(\Omega)$.
Using \eqref{eq:defpiaga} we infer that either $v=u-u_0$ or $\varepsilon=v\leq u-u_0$ or $v=\varepsilon +\sigma -u_0$ and $u_0\geq \sigma$. In all three cases (see \eqref{eq:G0}) we have that $G_0(x,v)\in L^1(\Omega)$ and that
\begin{equation}\label{eq:vasconumero1}((u_0-\sigma)^++u-u_0-\varepsilon)^+=u-u_0-v\in W_0^{s,2}(\Omega)
\end{equation}
and
\begin{equation}\label{eq:sakslachicco}
(\frac{1}{u_0^{\gamma}}-\frac{1}{u^{\gamma}})(v+u_0-u)\in L^1(\Omega),\end{equation}
where we used a similar argument already used  to get \eqref{eq:dsaljdsnkkajhlj}. Then
we use \eqref{eq:nortonnnn1}, (replacing $v$ with $u_0+v$)
\begin{eqnarray}\label{eq:secondavolta}\\\nonumber
&&\frac{c_{N,s}}{2}\iint_{\mathbb{R}^{2N}} \big((u(x)-u_0(x)) - (u(y)-u_0(y)\big)\cdot\\\nonumber
&&
\cdot\frac{\big((v(x)+u_0(x)-u(x)) - (v(y)+u_0(y)-u(y))\big)}{|x - y|^{N+2s}}\, dx\, dy\\\nonumber
&&\geq \int_{\Omega}\left(\frac{1}{u^{\gamma}}-\frac{1}{u_0^{\gamma}}\right)(v+u_0-u)\,dx
+\langle \omega,(v+u_0-u)\rangle.
\end{eqnarray}
In particular by \eqref{eq:defpiaga}, since $u\neq u_0+v$ implies $u>\varepsilon$, from \eqref{eq:sakslachicco}, we have that both
\begin{equation}\label{eq:dakladlserfb}\frac{1}{u^{\gamma}}(v+u_0-u)\in L^1({\Omega})\qquad \text{and}\qquad \frac{1}{u_0^{\gamma}}(v+u_0-u)\in L^1({\Omega}).\end{equation}
We know that  $u_0$ (see Proposition \ref{main}) satisfies
\begin{equation}\label{eq:chicaaecccasak}
\frac{c_{N,s}}{2}\iint_{\mathbb R^{2N}}\frac{(u_0(x)-u_0(y))(\varphi(x)-\varphi(y))}{|x-y|^{N+2s}}\, dx\, dy= \int_{\Omega}\frac{\varphi}{u_0^\gamma}\, dx,
\end{equation}
for every $\varphi\in C^{\infty}_c(\Omega)$. Using the nonlocal Kato inequality \cite{ChHuVeLa}  we get
\begin{equation}\label{eq:grigperl}
\frac{c_{N,s}}{2}\iint_{\mathbb R^{2N}}\frac{((u_0(x)-\sigma)^+-(u_0(y)-\sigma)^+)(\varphi(x)-\varphi(y))}{|x-y|^{N+2s}}\, dx\, dy\leq \int_{\Omega}\frac{\varphi}{u_0^\gamma}\, dx,
\end{equation}
for every $\varphi\in C^{\infty}_c(\Omega)$, $\varphi\geq 0$.
Using standard arguments, we point out that the inequality \eqref{eq:grigperl} holds true for non negative $\varphi\in W^{s,2}_0(\Omega)$ with compact support contained in $\Omega$.
By density, let $\varphi_n\in C^{\infty}_c(\Omega)$  such that $\varphi_n^+ \rightarrow u-u_0-v$ in $W^{s,2}_0(\Omega)$. Let us define
\begin{equation}\label{eq:testconpartepositiva}
 \tilde\varphi_n:=\text{min}\{u-u_0-v, \varphi_n^+\}. \end{equation}
 As we did above (see \eqref{eq:kjsakjsmmmhued}) we can   deduce that $(u_0-\sigma)^+\in W^{s,2}_0(\Omega)$. Therefore, using~\eqref{eq:grigperl} with $ \tilde\varphi_n$ defined in \eqref{eq:testconpartepositiva}, we  pass to the limit using \eqref{eq:dakladlserfb} and dominate convergence theorem, getting
  \begin{eqnarray}\label{eq:numeroperlimite}
&&\frac{c_{N,s}}{2}\iint_{\mathbb R^{2N}}\big((u_0(x)-\sigma)^+-(u_0(y)-\sigma)^+\big)\cdot
\\\nonumber
&&\cdot\frac{\big((u(x)-u_0(x)-v(x))-(u(y)-u_0(y)-v(y))\big)}{|x-y|^{N+2s}}\, dx\, dy\\\nonumber
&&\leq \int_{\Omega}\frac{u-u_0-v}{u_0^\gamma}\, dx.
\end{eqnarray}
Combining  \eqref{eq:numeroperlimite} with  \eqref{eq:secondavolta} we deduce
\begin{eqnarray}\label{eq:earrrrr}\\\nonumber
&&\frac{c_{N,s}}{2}\iint_{\mathbb R^{2N}}\big(((u_0(x)-\sigma)^++u(x)-u_0(x)-\varepsilon)-((u_0(y)-\sigma)^++u(y)-u_0(y)-\varepsilon)\big)\cdot
\\\nonumber
&&\cdot\frac{\big((u(x)-u_0(x)-v(x))-(u(y)-u_0(y)-v(y))\big)}{|x-y|^{N+2s}}\, dx\, dy\\\nonumber
&&\leq \int_{\Omega}\frac{1}{u^{\gamma}}(u-u_0-v)\,dx+\langle \omega,(u-u_0-v)\rangle.\\\nonumber
&&\leq \varepsilon^{-\gamma}\int_{\Omega}(u-u_0-v)\,dx+\langle \omega,(u-u_0-v)\rangle.
\end{eqnarray}
Let us set $f:=(u_0-\sigma)^++u-u_0-\varepsilon$ and observe that by  \eqref{eq:vasconumero1}, one has that
$f^+=u-u_0-v$.
We have that
\begin{eqnarray}\label{eq:earrrrbascqgkdsr}\\\nonumber
&&\frac{c_{N,s}}{2}\iint_{\mathbb R^{2N}}\frac{(f(x)-f(y))(f^+(x)-f^+(y))}{|x-y|^{N+2s}}\,dx\,dy\\\nonumber
&&=\frac{c_{N,s}}{2}\iint_{\mathbb R^{2N}}\frac{\big((f(x)-f(x)^+)-(f(y)-f(y)^+)\big)(f^+(x)-f^+(y))}{|x-y|^{N+2s}}\,dx\,dy\\\nonumber
&&+ \frac{c_{N,s}}{2}\iint_{\mathbb R^{2N}}\frac{|f^+(x)-f^+(y)|^2}{|x-y|^{N+2s}}\,dx\,dy
\\\nonumber
&&\geq  \frac{c_{N,s}}{2}\iint_{\mathbb R^{2N}}\frac{|f^+(x)-f^+(y)|^2}{|x-y|^{N+2s}}\,dx\,dy,
\end{eqnarray}
where we used the fact that
\[\frac{c_{N,s}}{2}\iint_{\mathbb R^{2N}}\frac{\big((f(x)-f(x)^+)-(f(y)-f(y)^+)\big)(f^+(x)-f^+(y))}{|x-y|^{N+2s}}\,dx\,dy\geq0.\]
In fact let $K_f^+=\text{support}\, (f^+)$ and $K_f^-=\text{support}\, (f^-)$. Therefore (using also a symmetry argument)
\begin{eqnarray}\nonumber
&&\frac{c_{N,s}}{2}\iint_{\mathbb R^{2N}}\frac{\big((f(x)-f(x)^+)-(f(y)-f(y)^+)\big)(f^+(x)-f^+(y))}{|x-y|^{N+2s}}\,dx\,dy
\\\nonumber
&&=c_{N,s}\iint_{K_f^+\times K_f^-}\frac{-f(y)f(x)}{|x-y|^{N+2s}}\,dx\,dy\geq 0
.\end{eqnarray}
Collecting \eqref{eq:earrrrr} and \eqref{eq:earrrrbascqgkdsr} we finally deduce
\begin{eqnarray}\nonumber
&&\frac{c_{N,s}}{2}\iint_{\mathbb R^{2N}}\frac{|(u(x)-u_0(x)-v(x))-(u(y)-u_0(y)-v(y))|^2}{|x-y|^{N+2s}}\,dx\,dy
\\\nonumber
&&\leq \varepsilon^{-\gamma}\int_{\Omega}(u-u_0-v)\,dx+\langle \omega,(u-u_0-v)\rangle.
\end{eqnarray}
Hence for any $\varepsilon>0$ (see also \eqref{eq:vasconumero1}),
\[((u_0-\sigma)^++u-u_0-\varepsilon)^+=u(x)-u_0(x)-v(x)\] is uniformly bounded w.r.t. $\sigma$ in $W^{s,2}_0(\Omega)$.  By Fatou's Lemma, for $\sigma \rightarrow 0^+$ we have that $(u-\varepsilon)^+\in W^{s,2}_0(\Omega)$, that is $u\leq 0$ on $\partial \Omega$ according to Definition \ref{rediri}.

Let now $v\in u+(W^{s,2}_0(\Omega)\cap L_c^\infty(\Omega))$ with $v\geq 0$ a.e. in $\Omega$ and $v_0\in C^{\infty}_c(\Omega)$, $v_0\geq0$ in~$\Omega$ such that $v_0=1$ where $v\neq u$. Then, for any $\varepsilon>0$, $G_0(x, v+\varepsilon v_0-u_0)\in L^1(\Omega)$ and therefore by \eqref{eq:nortonnnn1}
\begin{eqnarray}\nonumber\\\nonumber
&&\frac{c_{N,s}}{2}\iint_{\mathbb{R}^{2N}}((u(x)-u_0(x)) - (u(y)-u_0(y) )\cdot\\\nonumber
&&\cdot\frac{((v(x)+\varepsilon v_0(x)-u(x)) - (v(y)+\varepsilon v_0(y)-u(y)))}{|x - y|^{N+2s}}\, dx\, dy\\\nonumber
&&\geq \int_{\Omega}\left(\frac{1}{u^{\gamma}}-\frac{1}{u_0^{\gamma}}\right)(v+\varepsilon v_0-u)\,dx
+\langle \omega,(v+\varepsilon v_0-u)\rangle,
\end{eqnarray}
namely for $\varepsilon \rightarrow 0$
\begin{eqnarray}\label{eq:c'chidiceno}\\\nonumber
&&\frac{c_{N,s}}{2}\iint_{\mathbb{R}^{2N}}\frac{((u(x)-u_0(x)) - (u(y)-u_0(y) )((v(x)-u(x)) - (v(y)-u(y)))}{|x - y|^{N+2s}}\, dx\, dy\\\nonumber
&&\geq \int_{\Omega}\left(\frac{1}{u^{\gamma}}-\frac{1}{u_0^{\gamma}}\right)(v-u)\,dx
+\langle \omega,(v-u)\rangle.
\end{eqnarray}
By \eqref{eq:chicaaecccasak} we also have that
\[\frac{c_{N,s}}{2}\iint_{\mathbb{R}^{2N}}\frac{(u_0(x) -u_0(y))((v(x)-u(x)) - (v(y)-u(y)))}{|x - y|^{N+2s}}\, dx\, dy= \int_{\Omega}\frac{1}{u_0^{\gamma}}(v-u)\, dx\]
and together with \eqref{eq:c'chidiceno} this gives the second line in problem \eqref{eq:disvar}.

\

Conversely, let $u$ be a solution to \eqref{eq:disvar} and let $\hat u\in W^{s,2}_{\operatorname{loc}}(\Omega)$ be the minimum of the functional $J_{\omega}$. Therefore, as we just proved above, $\hat u$ verifies \eqref{eq:disvar}. Both $u$ and $\hat u$ are sub-supersolution to the problem \eqref{Eq:P}, according to the Definition \eqref{eq:subsupersolunonloc}. Hence by Theorem~\ref{comparison}, it follows that $u\equiv \hat u$, namely $u$ is the minimum of $J_\omega$.
\end{proof}
\begin{proof}[Proof of  Theorem \ref{main}]
If $u$ satisfies \eqref{eq:amkdigola1}, we can use a density argument to show that
\[
\frac{c_{N,s}}{2} \iint_{\mathbb R^{2N}}\frac{\big(u(x)-u(y)\big)(\varphi(x)-\varphi(y))}{|x-y|^{N+2s}}\,dx\,dy-\int_{\Omega}u^{-\gamma}\varphi\,dx= \langle \omega,\varphi\rangle,
\]
for all $\varphi\in W^{s,2}_{0}(\Omega)\cap L_c^{\infty}(\Omega)$. In fact  we can select a sequence $\{\varphi_\varepsilon
\}$ of approximating functions, such that for $\varepsilon$ that goes to zero, we have  $\|\varphi_\varepsilon-\varphi\|_{W^{s,2}_{0}(\Omega)}\rightarrow 0$  and for any $\rho>0$
\[\text{support}\,(\varphi_{\varepsilon})\subseteq \mathcal N_\rho(\text{support}\,(\varphi)), \]
where $\mathcal N_\rho(\text{support}\,\varphi)$ denotes a $\rho$-neighborood of the $\text{support}\,\varphi$. Then we use $\varphi_\varepsilon$ as test function in \eqref{eq:amkdigola1} and  we pass to the limit.

Assume now $\omega\in W^{-s,2}(\Omega)\cap L^1_{\operatorname{loc}}(\Omega)$ and that \eqref{eq:amkdigola2} holds. Obviously for every $v\in C^{\infty}_c(\Omega)$ such that $v\geq 0$ we deduce
\begin{equation}\label{eq:iltuorocknsamn1}
\frac 12 c_{N,s}\iint_{\mathbb R^{2N}}\frac{(u(x)-u(y))(v(x)-v(y))}{|x-y|^{N+2s}}\,dx\,dy-\int_{\Omega}u^{-\gamma}v\,dx\geq \int_{\Omega} \omega v \,dx.
\end{equation}
If $v\in C^{\infty}_c(\Omega)$ with $v\leq 0$, for $t>0$,  let us define $v_t=(u+tv)^+$. Let us denote   $K_{v_t}= \operatorname{supp}(v_t) $, $K_{v_t}^c:= \mathbb R^N \setminus K_{v_t}$ and use the decomposition
\begin{equation}\nonumber
\begin{split}
\mathbb{R}^N\times \mathbb{R}^N\,=\,\left( K_{v_t}\cup K_{v_t}^c\right)\times \left( K_{v_t}\cup K_{v_t}^c\right).
\end{split}
\end{equation}
Thus setting
\begin{equation}\label{eq:skjkakdclauddio}
\varphi_t=(v_t-u)/t,
\end{equation}
we have
\begin{eqnarray}\label{eq:splitgentiloni}
&&\frac{c_{N,s}}{2}\iint_{\mathbb{R}^{2N}}\frac{(u(x)-u(y))(\varphi_t(x)-\varphi_t(y))}{|x-y|^{N+2s}}\,dx\,dy\\\nonumber&&= \frac{c_{N,s}}{2}\iint_{\mathbb{R}^{2N}\setminus(K_{v_t}^c\times K_{v_t}^c)}\frac{(u(x)-u(y))(\varphi_t(x)-\varphi_t(y))}{|x-y|^{N+2s}}\,dx\,dy \\\nonumber
&&-\frac{c_{N,s}}{2t}\iint_{K_{v_t}^c \times K_{v_t}^c}\frac{|u(x)-u(y)|^2}{|x-y|^{N+2s}}\,dx\,dy\\\nonumber
&&
\leq \frac{c_{N,s}}{2}\iint_{\mathbb{R}^{2N}\setminus(K_{v_t}^c\times K_{v_t}^c)}\frac{(u(x)-u(y))(\varphi_t(x)-\varphi_t(y))}{|x-y|^{N+2s}}\,dx\,dy
\\\nonumber
&&
= \frac{c_{N,s}}{2}\iint_{K_{v_t}\times K_{v_t}}\frac{(u(x)-u(y))(v(x)-v(y))}{|x-y|^{N+2s}}\,dx\,dy
\\\nonumber
&&
+{c_{N,s}}\iint_{(K_{v_t}\times K_{v_t}^c)\cap\{u(x)\geq u(y)\}}\frac{(u(x)-u(y))(\varphi_t(x)-\varphi_t(y))}{|x-y|^{N+2s}}\,dx\,dy
\\\nonumber
&&
+{c_{N,s}}\iint_{(K_{v_t}\times K_{v_t}^c)\cap\{u(x)<u(y)\}}\frac{(u(x)-u(y))(\varphi_t(x)-\varphi_t(y))}{|x-y|^{N+2s}}\,dx\,dy:=A_1+A_2+A_3.
\end{eqnarray}
We examine the last  three terms in \eqref{eq:splitgentiloni}.
Using a  similar argument as in equations \eqref{eq:pentatonicscale}, \eqref{eq:eqgian1} and \eqref{eq:eqgian2}, since $u \in W^{s,2}_{\operatorname{loc}}(\Omega)$ and $v\in C^{\infty}_c(\Omega)$, we obtain  that
\begin{eqnarray}\label{eq:kanottedivasco}
&&A_1=\frac{c_{N,s}}{2}\iint_{K_{v_t}\times K_{v_t}}\frac{(u(x)-u(y))(v(x)-v(y))}{|x-y|^{N+2s}}\,dx\,dy\\\nonumber
&&\leq\frac{c_{N,s}}{2}\iint_{\Omega\times \Omega}\frac{|(u(x)-u(y))(v(x)-v(y))|}{|x-y|^{N+2s}}\,dx\,dy<+\infty.
\end{eqnarray}
To get \eqref{eq:kanottedivasco},  we point out that,  since \eqref{eq:amkdigola2} holds, thanks to Theorem \ref{thm:equivvariaz}, we have that $u\in W^{s,2}_{\operatorname{loc}}(\Omega)$ is the minimum of $J_\omega$ defined in $\eqref{eq:ognivoltanottevasco}$. Therefore $u\in u_0+W^{s,2}_0(\Omega)$ and by Proposition \ref{pro:u_0} it follows that $u\in L^1(\Omega).$ From \eqref{eq:kanottedivasco} we deduce also that
\begin{eqnarray}\label{eq:kanottedivascograziano}
&&\frac{(u(x)-u(y))(v(x)-v(y))}{|x-y|^{N+2s}}\cdot \chi_{K_{v_t}\times K_{v_t}}(x,y) \\\nonumber
&&\leq \frac{|(u(x)-u(y))(v(x)-v(y))|}{|x-y|^{N+2s}}\in L^1(\Omega \times \Omega),
\end{eqnarray}
where by $\chi_{\mathcal A}$ we denote the characteristic function of a set $\mathcal A$.
Using the definition \eqref{eq:skjkakdclauddio}, we infer that
\begin{eqnarray}\label{eq:h32dsalkehfqwehfwohiuhu}
&&A_2={c_{N,s}}\iint_{(K_{v_t}\times K_{v_t}^c)\cap\{u(x)\geq u(y)\}}\frac{(u(x)-u(y))(v(x)+u(y)/t)}{|x-y|^{N+2s}}\,dx\,dy\\\nonumber
&&\leq {c_{N,s}}\iint_{(K_{v_t}\times K_{v_t}^c)\cap\{u(x)\geq u(y)\}}\frac{(u(x)-u(y))(v(x)-v(y))}{|x-y|^{N+2s}}\,dx\,dy\\\nonumber
&&\leq {c_{N,s}}\iint_{\Omega\times \mathbb R^N\setminus \Omega}\frac{|(u(x)-u(y)||v(x)-v(y)|}{|x-y|^{N+2s}}\,dx\,dy\\\nonumber
&&+c_{N,s}\iint_{\Omega\times  \Omega}\frac{|(u(x)-u(y)||v(x)-v(y)|}{|x-y|^{N+2s}}\,dx\,dy.\end{eqnarray}
Therefore we have
\begin{eqnarray}\nonumber
&&\iint_{\Omega\times \mathbb R^N\setminus \Omega}\frac{|(u(x)-u(y)||v(x)-v(y)|}{|x-y|^{N+2s}}\,dx\,dy\\\nonumber
&&\leq C(s,N,\Omega)\int_{\Omega}|u(x)||v(x)|\,dx\int _{|y|\geq \bar R}\frac{1}{{|y|^{N+2s}}}\, dy<+\infty,
\end{eqnarray}
where we used the fact that $u(x)=v(x)=0$ for a.e. $x \in \mathbb R^N\setminus \Omega$ and $\text{dist}(\partial K_v,\partial \Omega)=\bar R$, since $v$ has compact support contained in $\Omega$ and $u\in L^1(\Omega)$.  Arguing as in \eqref{eq:kanottedivasco}, we have
\begin{eqnarray}\nonumber
c_{N,s}\iint_{\Omega\times \Omega}\frac{|(u(x)-u(y)||v(x)-v(y)|}{|x-y|^{N+2s}}\,dx\,dy<+\infty.
\end{eqnarray}
Hence, from \eqref{eq:h32dsalkehfqwehfwohiuhu} we deduce that
\begin{equation}\label{eq:h32dsalkehfqwehfwohiuhu1}
A_2\leq {c_{N,s}}\iint_{\Omega\times \mathbb R^N}\frac{|(u(x)-u(y)||v(x)-v(y)|}{|x-y|^{N+2s}}\,dx\,dy<+\infty.
\end{equation}
Actually we deduce that
\begin{eqnarray}\label{eq:kanottedivascograzianoluigi}
&&\frac{(u(x)-u(y))(\varphi_t(x)-\varphi_t(y))}{|x-y|^{N+2s}}\cdot \chi_{(K_{v_t}\times K_{v_t}^c)\cap\{u(x)\geq u(y)\}}
\\\nonumber
&&\leq\frac{|(u(x)-u(y)||v(x)-v(y)|}{|x-y|^{N+2s}}\in L^1(\Omega\times \mathbb R^N).
\end{eqnarray}
By the definition \eqref{eq:skjkakdclauddio} we also  get
\begin{eqnarray}\label{eq:laterzavasc}
&&A_3={c_{N,s}}\iint_{(K_{v_t}\times K_{v_t}^c)\cap\{u(x)<u(y)\}}\frac{(u(x)-u(y))(v(x)+u(y)/t)}{|x-y|^{N+2s}}\,dx\,dy\\\nonumber
&&\leq- \frac{c_{N,s}}{t}\iint_{(K_{v_t}\times K_{v_t}^c)\cap\{u(x)<u(y)\}}\frac{|u(x)-u(y)|^2}{|x-y|^{N+2s}}\,dx\,dy\leq 0.
\end{eqnarray}
Using \eqref{eq:splitgentiloni} and \eqref{eq:laterzavasc} we deduce
\begin{eqnarray}\nonumber
&&\frac{c_{N,s}}{2}\iint_{K_{v_t}\times K_{v_t}}\frac{(u(x)-u(y))(v(x)-v(y))}{|x-y|^{N+2s}}\,dx\,dy
\\\nonumber
&&
+{c_{N,s}}\iint_{(K_{v_t}\times K_{v_t}^c)\cap\{u(x)\geq u(y)\}}\frac{(u(x)-u(y))(\varphi_t(x)-\varphi_t(y))}{|x-y|^{N+2s}}\,dx\,dy\\\nonumber
&&\geq\frac{c_{N,s}}{2}\iint_{\mathbb{R}^{2N}}\frac{(u(x)-u(y))(\varphi_t(x)-\varphi_t(y))}{|x-y|^{N+2s}}\,dx\,dy.
\end{eqnarray}
Observe that $|\varphi_t|\leq |v|$. Since  \eqref{eq:amkdigola2} holds, we infer that
\begin{eqnarray}\label{eq:ioetenotvasc}
&&\frac{c_{N,s}}{2}\iint_{K_{v_t}\times K_{v_t}}\frac{(u(x)-u(y))(v(x)-v(y))}{|x-y|^{N+2s}}\,dx\,dy
\\\nonumber
&&
+{c_{N,s}}\iint_{(K_{v_t}\times K_{v_t}^c)\cap\{u(x)\geq u(y)\}}\frac{(u(x)-u(y))(\varphi_t(x)-\varphi_t(y))}{|x-y|^{N+2s}}\,dx\,dy
\\\nonumber
&&\geq \int_{\Omega}u^{-\gamma}\varphi_t\,dx+ \int_{\Omega} \omega \varphi_t \,dx.\end{eqnarray}
Recalling \eqref{eq:kanottedivasco}, \eqref{eq:kanottedivascograziano}, \eqref{eq:h32dsalkehfqwehfwohiuhu}, \eqref{eq:h32dsalkehfqwehfwohiuhu1}, \eqref{eq:kanottedivascograzianoluigi} and  that $u>0$ a.e. in $\Omega$, using the dominate convergence theorem in \eqref{eq:ioetenotvasc}, we finally get
\begin{eqnarray}\label{eq:ioetenotvasc222}
&&\frac{c_{N,s}}{2}\iint_{\Omega\times \Omega}\frac{(u(x)-u(y))(v(x)-v(y))}{|x-y|^{N+2s}}\,dx\,dy
\\\nonumber
&&
+{c_{N,s}}\iint_{(\Omega\times \mathbb {R}^N\setminus \Omega)}\frac{(u(x)-u(y))(v(x)-v(y))}{|x-y|^{N+2s}}\,dx\,dy
\\\nonumber
&&\geq \int_{\Omega}u^{-\gamma}v\,dx+ \int_{\Omega} \omega v \,dx.\end{eqnarray}
Up to a change of  variables in the second integrale in the l.h.s of \eqref{eq:ioetenotvasc222}, we deduce
\begin{equation}\label{eq:tuttoeposiibileunmondo}
\frac{c_{N,s}}{2}\iint_{\mathbb R^N\times \mathbb R^N}\frac{(u(x)-u(y))(v(x)-v(y))}{|x-y|^{N+2s}}\,dx\,dy\geq \int_{\Omega}u^{-\gamma}v\,dx+ \int_{\Omega} \omega v \,dx,\end{equation}
for all $v\in C^{\infty}_c(\Omega)$ with $v\leq 0$. Thanks to equations \eqref{eq:iltuorocknsamn1} and \eqref{eq:tuttoeposiibileunmondo} we deduce that $u$ satisfies \eqref{eq:amkdigola1}, concluding the proof.
\end{proof}
\begin{proof}[Proof of  Theorem \ref{eq:thspostato}]
Let $u\in W^{s,2}_{\operatorname{loc}}(\Omega)\cap L^{\frac{2N}{N-2s}}(\Omega)$ such that  \eqref{eq:amkdigola11estioam} holds. Let $\omega=g(x,u)=g_1(x,u-u_0)$. Therefore $\omega\in W^{-s,2}(\Omega)$. By Theorem \ref{main} and Theorem \ref{thm:equivvariaz} we have that $u\in u_0+W^{s,2}_0(\Omega)$ and $u-u_0$ minimizes \eqref{eq:ognivoltanottevasco}, i.e. for all $v\in W^{s,2}_0(\Omega)$ we have
\begin{eqnarray}\label{eq:vascocomequestasera}
&&\frac{c_{N,s}}{4}\iint_{\mathbb{R}^{2N}} \frac{|v(x) - v(y)|^2}{|x - y|^{N+2s}}\,dx\,dy+\int_{\Omega}G_0(x,v)\, dx\\\nonumber
&&\geq \frac{c_{N,s}}{4}\iint_{\mathbb{R}^{2N}} \frac{\big ((u(x)-u_0(x)) - (u(y)-u_0(y))\big)^2}{|x - y|^{N+2s}}\,dx\,dy+\int_{\Omega}G_0(x,u-u_0)\, dx\\\nonumber
&&-\langle \Phi'(u-u_0), v-(u-u_0)\rangle,
\end{eqnarray}
that is
\[ \langle \Phi'(u-u_0),v-(u-u_0)\rangle+ \Psi(v)-\Psi(u-u_0)\geq 0.\]
Recalling \eqref{eq:miseravallafinemannaia}, $u-u_0$ is a critical point of $F$ in the sense of \cite{Szul}.

Let us assume that \eqref{eq:lastappl} holds. Then we have \eqref{eq:vascocomequestasera}.  From \eqref{eq:lastappl} and Proposition \ref{pro:u_0} we deduce that $u\in W^{s,2}_{\operatorname{loc}}(\Omega)\cap L^{\frac{2N}{N-2s}}(\Omega)$ and therefore $\omega=g(x,u)=g_1(x,u-u_0)\in  W^{-s,2}(\Omega)\cap L^1_{\operatorname{loc}}(\Omega)$. By Theorem \ref{main} and Theorem \ref{thm:equivvariaz} we deduce that $u$ is a solution to~ \eqref{eq:amkdigola11estioam}.
\end{proof}

\end{document}